\renewcommand{\1}{\mathbbm{1}}
\newcommand{\imp}{\Rightarrow}
\newcommand{\en}{\enspace}
\newcommand{\sk}[1]{\left\langle #1 \right\rangle}
\def \A {\mathcal{A}}
\def \B {\mathcal{B}}
\def \L {\mathscr{L}}
\def \R {\mathbb{R}}
\def \N {\mathbb{N}}
\def \C {\mathbb{C}}
\def \l {\lambda}
\def \e {\varepsilon}
\def \S {\mathcal{S}}
\def \ol {\overline}
\def \a {\alpha}
\def \b {\beta}
\def \ph {\varphi}
\def \Torus {\mathbb{T}}
\def \rg {\operatorname{rg}}
\def \lin {\operatorname{lin}}
\def \co {\operatorname{co}}
\def \Fix {\operatorname{Fix}}
\def \ex {\operatorname{ex}}
\theoremstyle{plain}
\newtheorem*{thm*}{Theorem}
\newtheorem{thm}{Theorem}[section]
\newtheorem{lemma}[thm]{Lemma}
\newtheorem{prop}[thm]{Proposition}
\newtheorem{cor}[thm]{Corollary}
\theoremstyle{definition}
\newtheorem{definition}[thm]{Definition}
\newtheorem{example}[thm]{Example}
\newtheorem{examples}[thm]{Examples}
\newtheorem{remark}[thm]{Remark}
\title{Uniform families of ergodic operator nets}
\author{Marco Schreiber}
\address{Institute of Mathematics\\University of T\"ubingen\\Auf der
  Morgenstelle 10\\72076 T\"ubingen\\Germany}
\email{masc@fa.uni-tuebingen.de}
\date\today
\keywords{amenable semigroups, mean ergodic theorem, topological
  Wiener-Wintner theorems}
\begin{document}		

\maketitle

\begin{abstract}
 We study mean ergodicity in amenable operator semigroups and
 establish the connection to the convergence of strong and
 weak ergodic nets. We then use these results in order to show the convergence of uniform families of ergodic nets that appear in
 topological Wiener-Wintner theorems.
\end{abstract}

The classical mean ergodic theorem (see \cite[Chapter 2.1]{krengel85}) is
concerned with the convergence of the Cesàro means 
$\frac{1}{N}\sum_{n=0}^{N-1}S^n$ for some power bounded operator
$S$ on a Banach space $X$. The natural extension of the Cesàro means
for representations $\S$ of general semigroups is the notion of an
\emph{ergodic net} as introduced by Eberlein \cite{eberlein49} and
Sato \cite{sato78}. In the first part of this paper we discuss and
slightly modify this concept in order to adapt it better for the
study of operator semigroups. Sato showed in \cite{sato78} that in
amenable semigroups there
always exist weak ergodic nets. We extend this result and show
that even strong ergodic nets exist. Using this
fact we then state a mean ergodic theorem connecting the convergence of strong and weak
ergodic nets and the existence of a zero element in the closed convex
hull of $\S$.

In the second part we develop the adequate framework for
investigating uniform convergence in so-called topological Wiener-Wintner
theorems. 
In the simplest situation these theorems deal with the
convergence of averages 
$$\textstyle\frac{1}{N}\sum_{n=0}^{N-1} \lambda^n S^n $$ % \tag{$\star$}
%\end{equation}
 for some operator $S$ on spaces $C(K)$ and $\lambda$ in the unit circle
$\Torus$.
Assani \cite{assani03} and Robinson \cite{robinson94} asked when 
this convergence is uniform in $\lambda\in\Torus$. 
Subsequently, their results have been
generalised in different ways by Walters
\cite{walters96}, 
%\cite[Theorem 5]{walters96}, 
Santos and Walkden 
\cite{santos07}
%\cite[Prop. 4.3]{santos07}
and Lenz 
\cite{lenz09a, lenz09}.
%\cite[Theorem 2]{lenz09}.
We propose and study \emph{uniform families of ergodic nets} as an
appropriate concept for unifying and generalizing these and other
results.

\section{Amenable and mean ergodic operator semigroups}

We start from a semitopological semigroup $G$ and refer to Berglund et
al. \cite[Chapter 1.3]{berglund89} for an introduction to this theory.
Let $X$ be a Banach space and denote by $\L(X)$ the set of bounded
linear operators on $X$. We further assume that $\S=\{S_g:g\in G\}$ is
a bounded representation of $G$ on $X$, i.e.,
\begin{enumerate}[(i)]
\item $S_g\in \L(X)$ for all $g\in G$ and $\sup_{g\in G}\|S_g\|<\infty$,
\item $S_g S_h=S_{h g}$ for all $g,h\in G$,
\item $g\mapsto S_g x$ is continuous for all $x\in X$.
\end{enumerate}

For a bounded representation $\S$ we denote by $\co\S$
its convex hull and by $\ol{\co}\S$ the closure with
respect to the strong operator topology. Notice that $\S$ as well as $\co\S$ and
$\ol{\co}\S$ are topological semigroups with respect to the strong
and semitopological semigroups with respect to
the weak operator topology.

%\begin{definition}
A \emph{mean} on
the space $C_b(G)$ of bounded continuous functions on $G$ is a linear
functional $m\in C_b(G)'$ satisfying
 $\sk{m,\1}=\|m\|=1$. 

A mean $m\in C_b(G)'$ is called \emph{right (left) invariant} if
$$\sk{m,R_g f}=\sk{m,f} \quad \left(\sk{m,L_g f}=\sk{m,f}\right)
 \quad\forall g\in G, f\in C_b(G),$$ 
where 
$R_g f(h)=f(hg)$ and $L_gf(h)=f(gh)$ for $h\in G$.

A mean $m\in C_b(G)'$ is called \emph{invariant} if it is both right and left invariant.

The semigroup $G$ is called \emph{right (left) amenable} if there
exists a right (left) invariant mean on $C_b(G)$. It is called
\emph{amenable} if there exists an invariant mean on $C_b(G)$ (see Berglund et
al. \cite[Chapter 2.3]{berglund89} or the survey article of Day
\cite{day69}).

For simplicity, we shall restrict ourselves to right
amenable and amenable semigroups, although most of the results also
hold for left amenable semigroups.

Notice that if $\S:=\{S_g:g\in G\}$ is a bounded representation of a
(right) amenable semigroup $G$
on $X$, then $\S$ endowed with the strong as well as the weak
operator topology is also (right) amenable. Indeed, if $\tilde{m}\in
C_b(G)'$ is a (right) invariant mean on $C_b(G)$, then $m\in C_b(\S)'$ given
by 
$$\sk{m,f}:=\langle\tilde{m},\tilde{f}\rangle\quad (f\in C_b(\S))$$
defines a (right) invariant mean on $C_b(\S)$, where $\tilde{f}(g)=f(S_g)$. 
%Let $G$ be acting linearly on $X$ by the bounded family $\S=\{S_g:g\in G\}$.

In the following, the space $\L(X)$ will be endowed with
the strong operator topology unless stated otherwise.
\begin{definition}
\label{def:ergodic-nets}
  A net $(A_\a^\S )_{\a\in \A}$ of operators in $\L(X)$ is called a
  \emph{strong right (left) $\S$-ergodic net}  if the following conditions hold.
  \begin{enumerate}
  \item $A_\a^\S \in \ol{\co}\S$ for all $\a\in \A$.

  \item $(A_\a^\S)$ is \emph{strongly right (left) asymptotically invariant}, i.e.,

$\lim_\a A_\a^\S x-A_\a^\S S_g x=0\en \left(\lim_\a A_\a^\S x-
S_g A_\a^\S x=0\right)$ for all $x\in X$ and  $g\in G$.
  \end{enumerate}
The net $(A_\a^\S )$ is called a
  \emph{weak right (left) $\S$-ergodic net} if $(A_\a^\S)$ is
  \emph{weakly right (left) asymptotically invariant}, i.e., if the
  limit in (2) is taken 
  with respect to the weak topology $\sigma(X,X')$ on $X$.
The net $(A_\a^\S )$ is called a \emph{strong (weak) $\S$-ergodic net} if it
is a strong (weak) right and left $\S$-ergodic net. 
\end{definition}

We note that our definition differs slightly from
that of Eberlein~\cite{eberlein49}, Sato~\cite{sato78} and
Krengel~\cite[Chapter 2.2]{krengel85}. Instead of condition (1) they
require only

\begin{enumerate}[(1')]
\item 
$A_\a^\S x \in \ol{\co}\S x$ for all $\a\in \A$ and $x\in X$.
\end{enumerate}

However, the
existence of (even strong) ergodic nets in the sense of
Definition~\ref{def:ergodic-nets} is ensured by
Corollary~\ref{cor:existence-of-strong-ergodic-nets}. 
Moreover, both definitions lead to the same convergence results (see
Theorem~\ref{thm:mean-ergodic-everywhere} below). 
The reason is that if the limit $Px:=\lim_\a A_\a^\S x$ exists for all $x\in X$, then
the operator $P$ satisfies $P\in\ol{\co}\S$ rather than only
$Px\in\ol{\co}\S x$ for all $x\in X$ (see Nagel \cite[Theorem~1.2]{nagel73}). 

%  This definition goes back to Eberlein \cite{eberlein49}, who called the
%   above nets \emph{almost-invariant integrals}. He showed that all
%   bounded abelian operator semigroups admit strong ergodic
%   nets (see Example \ref{ex:cesaro-nets}
%   (\ref{ex:eberlein}) below and \cite[Example
%   3]{eberlein49}). The term \emph{ergodic net} was introduced by Sato
%   in \cite{sato78} (see also Krengel \cite[Chapter 2.2]{krengel85}).

Here are some typical examples of ergodic nets. 
% (This can be deduced from
% Proposition \ref{prop:uniformly-ergodic} below.)

\begin{examples}
\label{ex:cesaro-nets}
  \begin{enumerate}[(a)]
  
\item          % klassische Cesaro Mittel
Let $S\in \L(X)$ with $\|S\|\le 1$ and consider the
  representation $\S=\{S^n:n\in\N\}$ of the semigroup $(\N,+)$ on
  $X$. Then the \emph{Cesàro means} $(A_N^\S)_{N\in\N}$ given by
$$A_N^\S :=\frac{1}{N}\sum_{n=0}^{N-1} S^n$$ 
form a strong $\S$-ergodic net.

\item           % Abel Mittel
In the situation of (a), the \emph{Abel means} $(A_r^\S)_{0<r<1}$ given by
$$A_r^\S:=(1-r)\sum_{n=0}^\infty r^n S^n$$
form a strong $\S$-ergodic net.

\item          % Kontinuierliche Cesaro Mittel
Consider the semigroup $(\R_+,+)$ being represented on $X$ by a bounded
  $C_0$-semigroup $\S=\{S(t): t\in\R_+\}$. Then
  $(A_s^\S)_{s\in\R_+}$ given by
$$A_s^\S x:=\frac{1}{s}\int_0^s S(t)x\,dt\quad (x\in X)$$
is a strong $\S$-ergodic net.

\item            % Beispiel Eberlein
\label{ex:eberlein}
Let $\S=\{S_g:g\in G\}$ be a bounded representation on $X$ of an abelian
semigroup $G$. Order the elements of $\co\S$ by
  setting $U\le V$ if there exists $W\in \co\S$ such that
  $V=WU$. Then $(A_U^S)_{U\in\co\S}$ given by 
$$A_U^\S:=U$$ 
is a strong $\S$-ergodic net.

\item           % Folner Netze
\label{ex:folner}
Let $H$ be a locally compact group with left Haar measure
  $|\cdot|$ and let $G\subset H$ be a subsemigroup. Suppose that there
  exists a \emph{F\o lner net} $(F_\a)_{\a\in \A}$ in $G$ 
%(also called \emph{summing net}, 
(see \cite[Chapter 4]{paterson88}), i.e., a net
  of compact sets such that
  $|F_\a|>0$ for all $\a\in\A$ and
$$\lim_\a \frac{|g F_\a \Delta F_\a|}{|F_\a|} =0 \quad\forall g\in G,$$
where $A\Delta B:=(A\setminus B)\cup (B\setminus A)$ denotes the
symmetric difference of two sets $A$ and $B$.
Suppose that $\S:=\{S_g: g\in G\}$ is a bounded representation of $G$ on $X$.
Then  $(A_\a^\S)_{\a\in \A}$ given by
$$A_\a^\S x:=\frac{1}{|F_\a|}\int_{F_\a} S_g x\, dg\en (x\in X)$$
is a strong right $\S$-ergodic net.
\end{enumerate}
\end{examples}

If $G$ is a right amenable group in the situation of Example \ref{ex:cesaro-nets}
(\ref{ex:folner}), then there always exists a F\o lner net
$(F_\a)_{\a\in \A}$ in $G$ (see \cite[Theorem 4.10]{paterson88}).
Hence, in right amenable groups there always exist strong right $\S$-ergodic nets
for each representation $\S$.  
In \cite [Proposition 1]{sato78}, Sato showed the existence of weak
right (left) ergodic nets in right (left) amenable operator semigroups. 
We give a proof for the case of an amenable semigroup. The one-sided
case is analogous.

  \begin{prop}
    \label{prop:amenable-weak-ergodic-nets}
Let $G$ be represented on $X$ by a bounded (right) amenable
semigroup $\S=\{S_g:g\in G\}$. Then there exists a weak (right)
$\S$-ergodic net in $\L(X)$.
  \end{prop}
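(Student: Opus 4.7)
The plan is to use amenability of $\S$ (as the paper has noted, $\S$ inherits amenability from $G$) to obtain an invariant mean $m$ on $C_b(\S)$, and then to approximate $m$ in the weak$^*$ topology of $C_b(\S)'$ by finitely supported means. Each such approximating mean is a convex combination of point evaluations $\delta_{T}$ with $T\in\S$, and the corresponding convex combination of the operators $T$ will be our ergodic net.

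In detail, I would fix an invariant mean $m\in C_b(\S)'$. A standard consequence of the Hahn--Banach theorem shows that the set of all means is the weak$^*$-closure of $\co\{\delta_T:T\in \S\}$ in $C_b(\S)'$: any mean $m$ not in this closure could be separated by some $f\in C_b(\S)$ from the Dirac means, giving $\langle m,f\rangle>\sup_{T\in \S}f(T)$, contradicting the positivity of $m$ and $\langle m,\1\rangle=1$. Hence there is a net $(m_\a)_{\a\in \A}$ of finitely supported convex combinations $m_\a=\sum_i \l_i^\a\,\delta_{T_i^\a}$ with $T_i^\a\in \S$, $\l_i^\a\ge 0$, $\sum_i\l_i^\a=1$, such that $m_\a\to m$ weak$^*$. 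Setting $A_\a^\S:=\sum_i \l_i^\a T_i^\a\in\co\S\subseteq\ol{\co}\S$ gives condition (1) of Definition \ref{def:ergodic-nets} for free.

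For condition (2), fix $x\in X$, $\ph\in X'$ and $g\in G$. The function $f_{x,\ph}:\S\to\C$, $T\mapsto \langle\ph,Tx\rangle$ lies in $C_b(\S)$ by the boundedness of $\S$ and strong continuity of the representation, and a short calculation using $R_{S_g}f_{x,\ph}(T)=\langle\ph,TS_gx\rangle$ and $L_{S_g}f_{x,\ph}(T)=\langle S_g'\ph,Tx\rangle=f_{x,S_g'\ph}(T)$ yields
$$\langle \ph, A_\a^\S x-A_\a^\S S_g x\rangle=\langle m_\a,f_{x,\ph}-R_{S_g}f_{x,\ph}\rangle, \qquad \langle \ph, A_\a^\S x-S_g A_\a^\S x\rangle=\langle m_\a,f_{x,\ph}-L_{S_g}f_{x,\ph}\rangle.$$
Since $m_\a\to m$ in $\sigma(C_b(\S)',C_b(\S))$ and $m$ is both right- and left-invariant, both right-hand sides tend to $0$. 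Hence $(A_\a^\S)$ is weakly right and left asymptotically invariant, so it is a weak $\S$-ergodic net. The one-sided version is identical after replacing ``invariant'' by ``right invariant'' throughout.

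The main technical ingredient is really the weak$^*$-density of $\co\{\delta_T\}$ in the set of means; everything else is a routine translation between means on $C_b(\S)$ and convex combinations of operators. I do not expect a serious obstacle, since positivity of means together with Hahn--Banach separation handles the density, and invariance of $m$ automatically delivers both the right and left asymptotic invariance through the identities above.
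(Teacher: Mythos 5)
Your proof is correct and follows essentially the same route as the paper: approximate the invariant mean $m$ in the weak$^*$ topology by convex combinations of point evaluations $\delta_{S_g}$, pass to the corresponding convex combinations of operators, and transfer asymptotic invariance via the functions $f_{x,x'}(T)=\langle Tx,x'\rangle$. The only difference is in justifying the weak$^*$-density of $\co\{\delta_T:T\in\S\}$ in the set of means, where you use Hahn--Banach separation together with positivity while the paper invokes the Krein--Milman theorem; both yield the same approximating net and the rest of the argument coincides.
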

  \begin{proof}
    Let $m\in C_b(\S)'$ be an invariant mean.
    Denote by $B$ the
    closed unit ball of  $C_b(\S)'$ and by $\ex B$ the set of extremal
    points of $B$. Since $m$ is a mean, we have $m\in
    B=\ol{\co}\ex B$ by the Krein-Milman theorem, where the closure is
    taken with respect to the weak$^*$-topology. Since $\ex
    B=\{\delta_{S_g}: g\in G\}$, this implies that 
    there exists a net $(\sum_{i=1}^{N_\a}
    \lambda_{i,\a}\delta_{S_{g_i}})_{\a\in\A}\subset \co
    \{\delta_{S_g}: g\in G\}$ with $\lim_\a
    \sum_{i=1}^{N_\a} \lambda_{i,\a}\delta_{S_{g_i}}=m$ in the weak$^*$-topology. 
   Since $m$ is invariant, we obtain
    $$\lim_\a \sum_{i=1}^{N_\a}
    \lambda_{i,\a}\delta_{S_{g_i}}(f-R_{S_g}f)=\lim_\a \sum_{i=1}^{N_\a}
    \lambda_{i,\a}\delta_{S_{g_i}}(f-L_{S_g}f) =0\quad\forall  g\in G, f\in C_b(\S).$$
   Define the net $(A_\a^{\S})_{\a\in\A}$ by
$ A_\a^{\S}:= \sum_{i=1}^{N_\a} \lambda_{i,\a} S_{g_i}\in \co\S$
for $\a\in\A$. To see that $(A_\a^{\S})_{\a\in\A}$ is weakly
asymptotically invariant, let $x\in X$ and $x'\in X'$ and define
$f_{x,x'}\in C_b(\S)$ by $f_{x,x'}(S_g):=\sk{S_g x,x'}$ for
$g\in G$. Then for all $g\in G$ we have 
\begin{align*}
  \sk{A_\a^{\S}x- A_\a^{\S} S_gx,x'}&= \sum_{i=1}^{N_\a}
  \lambda_{i,\a}\left(\sk{S_{g_i}x,x'}-\sk{S_{g_i}S_gx,x'}\right)  \\
&=\sum_{i=1}^{N_\a} \lambda_{i,\a}(f_{x,x'}(S_{g_i})-R_{S_g}f_{x,x'}(S_{g_i}))\\
&=\sum_{i=1}^{N_\a}
\lambda_{i,\a}\delta_{S_{g_i}}(f_{x,x'}-R_{S_g}f_{x,x'})\longrightarrow
0
\end{align*}
and 
$$\sk{A_\a^{\S}x- S_g A_\a^{\S}x,x'}\longrightarrow 0$$
analogously. Hence $(A_\a^{\S})_{\a\in\A}$ is a weak $\S$-ergodic net.
 \end{proof}

We now show that the existence of weak ergodic nets
actually implies the existence of strong ergodic nets. 

%\fxwarning{Bemerkung, dass auf Abschluss verzichtet werden kann?}
  \begin{thm}
\label{thm:existence-of-strong-ergodic-nets}
    Let $G$ be represented on $X$ by a bounded semigroup $\S=\{S_g:g\in G\}$. Then the following assertions are equivalent.
    \begin{enumerate}
    \item There exists a weak (right) $\S$-ergodic net.
    \item There exists a strong (right) $\S$-ergodic net.
    \end{enumerate}
  \end{thm}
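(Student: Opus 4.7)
The implication (2)$\Rightarrow$(1) is immediate, since strong convergence to zero in $X$ entails weak convergence to zero. For the converse, my plan is to apply a Hahn--Banach convexification to a given weak right $\S$-ergodic net $(A_\a^\S)_{\a\in\A}$. I will show that for every finite $F\subset G$, every finite $E\subset X$ and every $\e>0$ there exists $B_{F,E,\e}\in\ol{\co}\S$ satisfying $\|B_{F,E,\e}x-B_{F,E,\e}S_gx\|<\e$ for all $x\in E$ and all $g\in F$. Once this is done, ordering the triples $\b=(F,E,\e)$ by inclusion in the first two components and the reverse order in the third yields a net $(B_\b)$; fixing $x\in X$, $g\in G$, $\delta>0$ and taking $\b_0=(\{g\},\{x\},\delta)$ gives $\|B_\b x-B_\b S_g x\|<\delta$ for all $\b\ge\b_0$, so $(B_\b)$ is a strong right $\S$-ergodic net.

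To produce $B_{F,E,\e}$, I enumerate $F=\{g_1,\dots,g_k\}$ and $E=\{x_1,\dots,x_m\}$ and consider the vectors
$$v_\a:=\bigl(A_\a^\S x_i-A_\a^\S S_{g_j}x_i\bigr)_{1\le i\le m,\,1\le j\le k}\in X^{mk}.$$
By weak right asymptotic invariance, each coordinate of $v_\a$ tends to $0$ in $\s(X,X')$, so $v_\a\to 0$ in the weak topology of $X^{mk}$. Since the weak and norm closures of a convex subset of a Banach space coincide (a standard Hahn--Banach separation argument), $0$ lies in the norm closure of $\co\{v_\a:\a\in\A\}$. Hence there are coefficients $\lambda_1,\dots,\lambda_N\ge 0$ with $\sum_l\lambda_l=1$ and indices $\a_1,\dots,\a_N\in\A$ such that $\|\sum_{l=1}^N\lambda_l v_{\a_l}\|<\e$. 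Setting $B:=\sum_{l=1}^N\lambda_l A_{\a_l}^\S$, convexity of $\ol{\co}\S$ gives $B\in\ol{\co}\S$, and
$$Bx_i-BS_{g_j}x_i=\sum_{l=1}^N\lambda_l\bigl(A_{\a_l}^\S x_i-A_{\a_l}^\S S_{g_j}x_i\bigr)$$
is the $(i,j)$-coordinate of $\sum_l\lambda_l v_{\a_l}$, hence has norm $<\e$.

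The main obstacle will be the passage from \emph{coordinatewise} weak convergence to zero to \emph{simultaneous} norm smallness across all prescribed pairs $(x,g)$. The convexification in the finite product $X^{mk}$ is precisely what makes this possible: Mazur's trick turns a weak tail into a single convex combination that is uniformly small in norm over all coordinates, and convexity of $\ol{\co}\S$ ensures the averaged operator stays in the admissible set. The two-sided case will be handled identically, with $v_\a$ enlarged to include the components $A_\a^\S x_i-S_{g_j}A_\a^\S x_i$ as well.
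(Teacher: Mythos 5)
Your proof is correct and follows essentially the same route as the paper's: both convert weak asymptotic invariance into strong by applying Mazur's theorem (weak closure $=$ norm closure for convex sets) to the image of $\ol{\co}\S$ in a product space, using convexity of $\ol{\co}\S$ to keep the averaged operators admissible. The only difference is packaging — the paper works once and for all in the full product $\prod_{(g,x)\in G\times X}X\times X$ and extracts the new net directly from the closure, whereas you work in finite products $X^{mk}$ and reassemble via the directed set of triples $(F,E,\e)$; both are valid.
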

 \begin{proof}
We give a proof for the case of an $\S$-ergodic
net. The one-sided case can be shown in a similar way.

(1)$\imp$(2): Consider the locally convex space $E:=\prod_{(g,x)\in
  G\times X}X\times X$ endowed with the product topology, where
$X\times X$ carries the product (norm-)topology. 
Define the linear map
$$\Phi: \L(X)\to E,\quad \Phi(T)=(TS_g x-Tx,S_gTx-Tx)_{(g,x)\in G\times X}.$$
By 17.13(iii) in \cite{kelley-namioka63} the weak topology
$\sigma(E,E')$ on the product $E$ coincides with the product of the
weak topologies of the coordinate spaces. Hence, if $(A_\a^\S)_{\a\in
  \A}$ is a weak $\S$-ergodic net on $X$, then $\Phi(A_\a^\S)\to 0$ with respect to the weak topology on $E$ and thus $0\in \ol{\Phi(\ol{\co} \S)}^{\sigma(E,E')}$. 
Since the weak and strong closure coincide on the convex set $\Phi(\ol{\co} \S)$, there exists a net $(B_\beta^\S)_{\b\in\B}\subset\ol{\co} \S$ with $\Phi(B_\b^\S)\to 0$ in the topology of $E$. By the definition of this topology this means $\|B_\b^\S S_gx-B_\b^\S x\|\to 0$ and $\|S_g B_\b^\S x-B_\b^\S x\|\to 0$ for all $(g,x)\in G\times X$ and hence $(B_\b^\S)_{\b\in\B}$ is a strong $\S$-ergodic net.

(2)$\imp$(1) is clear.
 \end{proof}

The following corollary is a direct consequence of Proposition \ref{prop:amenable-weak-ergodic-nets} and Theorem \ref{thm:existence-of-strong-ergodic-nets}.

 \begin{cor}
\label{cor:existence-of-strong-ergodic-nets}
    Let $G$ be represented on $X$ by a bounded (right)
    amenable semigroup $\S=\{S_g:g\in G\}$. Then there exists a strong
    (right) $\S$-ergodic net.
 \end{cor}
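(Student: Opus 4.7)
The plan is to chain together the two immediately preceding results, since each half of the hypothesis feeds into exactly one of them. By Proposition~\ref{prop:amenable-weak-ergodic-nets}, the (right) amenability assumption on $\S$ guarantees that a weak (right) $\S$-ergodic net exists in $\L(X)$. Then Theorem~\ref{thm:existence-of-strong-ergodic-nets} asserts the equivalence of the existence of weak and strong (right) $\S$-ergodic nets for any bounded representation, so applying the implication (1)$\Rightarrow$(2) of that theorem upgrades the weak net to a strong one.

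So the entire proof is essentially a one-line invocation: take the weak ergodic net produced by Proposition~\ref{prop:amenable-weak-ergodic-nets} from the invariant mean via the Krein--Milman approximation, and feed it into the convex-closure argument of Theorem~\ref{thm:existence-of-strong-ergodic-nets} (which exploits that weak and strong closures of the convex set $\Phi(\ol{\co}\S)\subset E$ agree) to obtain the desired strong ergodic net. No new obstacle arises; the genuine work has already been done in the two preceding proofs, and the corollary is just the bookkeeping step that combines them. I would simply state that the claim is immediate from the two cited results and leave it at that.
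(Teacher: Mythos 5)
Your proposal is correct and matches the paper exactly: the corollary is stated there as a direct consequence of Proposition~\ref{prop:amenable-weak-ergodic-nets} combined with Theorem~\ref{thm:existence-of-strong-ergodic-nets}, precisely the chaining you describe.
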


If ergodic nets converge we are led to
the concept of \emph{mean ergodicity}. We use the following abstract notion.

\begin{definition}
 The semigroup $\S$ is called \emph{mean ergodic} if $\ol{\co}\S$
 contains a zero element $P$ (cf. \cite[Chapter 1.1]{berglund89}), called the \emph{mean ergodic projection of $\S$}. 

Notice that for $P$ being a zero element of $\ol{\co}\S$ it suffices that
 $PS_g=S_g P=P$ for all $g\in G$.
\end{definition}
Nagel \cite{nagel73} and Sato \cite{sato78} studied such
semigroups and their results are summarized in Krengel \cite[Chapter
2]{krengel85}.

In the next theorem we collect a series of properties equivalent to
mean ergodicity. Most of them can be found in Krengel \cite[Chapter 2, Theorem
1.9]{krengel85}, but we give a proof for completeness.

Denote the fixed spaces of
$\S$ and $\S'$ by $\Fix\S=\{x\in X: S_g x=x\;\forall g\in G\}$ and
$\Fix\S'=\{x'\in X': S_g' x'=x'\;\forall g\in G\}$ respectively and the linear span
of the set $\rg(I-\S)=\{x-S_g x: x\in X, g\in G\}$ by $\lin\rg(I-\S)$.

\begin{thm}
  \label{thm:mean-ergodic-everywhere}
Let $G$ be represented on $X$ by a bounded right amenable
semigroup $\S=\{S_g:g\in G\}$. Then the following assertions are equivalent.
\begin{enumerate}
\item $\S$ is mean ergodic with mean ergodic projection $P$.
\item $\ol{\co}\S x\cap \Fix\S\neq\emptyset$ for all $x\in X$.
\item $\Fix\S$ separates $\Fix\S'$.
\item $X= \Fix\S\oplus \ol{\lin}\rg(I-\S)$.   %\ol{\lin}\{y-S_gy: y\in X, g\in G\}
\item $A_\a^\S x$ has a weak cluster point in $\Fix\S$ for some/every
  weak right $\S$-ergodic net $(A_\a^\S)$ and all $x\in X$.
\item $A_\a^\S x$ converges weakly to a fixed point of $\S$ for
  some/every weak right $\S$-ergodic net~$(A_\a^\S)$ and all $x\in X$.
\item $A_\a^\S x$ converges weakly to a fixed point of $\S$ for
  some/every strong right $\S$-ergodic net $(A_\a^\S)$ and all $x\in X$.
\item $A_\a^\S x$ converges strongly to a fixed point of $\S$ for
  some/every strong right $\S$-ergodic net $(A_\a^\S)$ and all $x\in X$.
\end{enumerate}
The limit $P$ of the nets $(A_\a^\S)$ in the weak and strong operator
topology, respectively, is the mean ergodic projection of $\S$ mapping $X$ onto
$\Fix \S$ along $\ol{\lin}\rg(I-\S)$. 
\end{thm}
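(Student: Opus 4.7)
The plan is to run a cycle through the ``analytic'' conditions (5)--(8), connect it to the ``algebraic'' condition (1), and then match them to the ``dual'' conditions (2)--(4) via Hahn-Banach. Throughout I use Corollary~\ref{cor:existence-of-strong-ergodic-nets} to guarantee that strong right $\S$-ergodic nets exist, so that the ``some'' and ``every'' formulations are both meaningful.

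The engine is the implication (2)$\imp$(8). Fix $x\in X$ and choose $y\in\ol{\co}\S x\cap\Fix\S$; by the definition of $\ol{\co}\S x$, write $y=\lim_k T_k x$ with $T_k\in\co\S$. Since $y\in\Fix\S$, every $T\in\co\S$ fixes $y$, and by SOT-density so does every $A_\a^\S\in\ol{\co}\S$. Splitting
\[
\|A_\a^\S x-y\|\leq\|A_\a^\S(x-T_k x)\|+\|A_\a^\S\|\cdot\|T_k x-y\|,
\]
the second summand is controlled by the uniform bound $\sup_\a\|A_\a^\S\|\le\sup_{g\in G}\|S_g\|<\infty$, while the first is a finite convex combination of terms $A_\a^\S(x-S_{g_{i,k}}x)$, each tending to $0$ by strong right asymptotic invariance. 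An $\e/2+\e/2$ argument yields $A_\a^\S x\to y$ in norm. The chain (8)$\imp$(7)$\imp$(6)$\imp$(5) is tautological, and (5)$\imp$(2) holds because any weak cluster point $y$ of $(A_\a^\S x)$ lies in $\Fix\S$ by hypothesis and in the weak closure of $\co\S x$, which equals $\ol{\co}\S x$ by Mazur's theorem.

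For (1)$\equi$(2), the forward direction is immediate: a zero element $P$ of $\ol{\co}\S$ satisfies $Px\in\ol{\co}\S x$ (write $P$ as the SOT-limit of convex combinations of $\S$) and $Px\in\Fix\S$ (since $S_g P=P$). Conversely, the pointwise limit $Px:=\lim_\a A_\a^\S x$ supplied by (8) defines a bounded linear operator with $PS_g=P$ (from right asymptotic invariance) and $S_g P=P$ (as $Px\in\Fix\S$); moreover, $Px\in\ol{\co}\S x$ is precisely the pointwise hypothesis of Nagel's theorem~\cite[Theorem~1.2]{nagel73}, which upgrades $P$ to an element of $\ol{\co}\S$, making it the zero. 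The equivalences (2)$\equi$(3)$\equi$(4) follow from Hahn-Banach together with the identity $\Fix\S'=(\ol{\lin}\rg(I-\S))^\perp$: any $x'$ vanishing on both $\Fix\S$ and $\ol{\lin}\rg(I-\S)$ lies in $\Fix\S'$ and is killed by $\Fix\S$, hence is $0$ by (3), so Hahn-Banach gives $X=\Fix\S+\ol{\lin}\rg(I-\S)$. Directness comes from applying any strong right $\S$-ergodic net, which fixes $\Fix\S$ pointwise and annihilates $\ol{\lin}\rg(I-\S)$; the final statement about $P$ then reads off the decomposition.

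The main obstacle, as I see it, is the sharpening in (2)$\imp$(8): only right asymptotic invariance is at our disposal, so weak cluster points of $(A_\a^\S x)$ are not \emph{a priori} fixed by $\S$. The hypothesis that a fixed vector already lives in $\ol{\co}\S x$ is the lever that converts mere weak cluster behaviour into strong convergence; once this is in place, the remaining implications are organisational, with Nagel's theorem doing the work of promoting the pointwise projection to an actual element of $\ol{\co}\S$.
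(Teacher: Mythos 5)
Your engine (2)$\imp$(8) is correct and is in fact more direct than the paper's route: fixing $y\in\ol{\co}\S x\cap\Fix\S$, approximating $y$ in norm by $T_kx$ with $T_k\in\co\S$, and combining the fact that every $A_\a^\S\in\ol{\co}\S$ fixes $y$ with right asymptotic invariance applied to the finitely many generators of $T_k$ does yield $A_\a^\S x\to y$ in norm. (The paper never proves (2)$\imp$(8) directly; it routes all convergence statements through the decomposition (4).) Your (5)$\imp$(2) via Mazur and your (1)$\equi$(2) via Nagel's theorem coincide in substance with the paper's steps. One quantifier slip: (7)$\imp$(6) is \emph{not} tautological in the ``every'' reading, since (6) ranges over the strictly larger class of weak right $\S$-ergodic nets; you should note that your (2)$\imp$(8) computation, paired against a functional $x'\in X'$, gives (2)$\imp$(6) for every weak right ergodic net, which repairs this.

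The genuine gap is in (3)$\imp$(4). The Hahn--Banach argument you give --- any $x'$ annihilating $\Fix\S+\ol{\lin}\rg(I-\S)$ lies in $\Fix\S'$, is killed by $\Fix\S$, hence is $0$ --- proves only that $\Fix\S+\ol{\lin}\rg(I-\S)$ is \emph{dense} in $X$; a proper dense subspace also has trivial annihilator, so ``Hahn--Banach gives $X=\Fix\S+\ol{\lin}\rg(I-\S)$'' does not follow. Equality is precisely the nontrivial content of (4), and it is where right amenability must enter (the sum can be a proper dense subspace for a general bounded semigroup). The paper closes this by taking a weak right $\S$-ergodic net, setting $D=\{x\in X:\ \sigma\text{-}\lim_\a A_\a^\S x\ \text{exists}\}$, observing that $D=\Fix\S\oplus\ol{\lin}\rg(I-\S)$ (the net is the identity on $\Fix\S$ and tends weakly to $0$ on $\lin\rg(I-\S)$), and using the uniform boundedness of $(A_\a^\S)$ to conclude that $D$ is closed, hence equal to $X$. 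Your sketch supplies the directness of the sum but not its closedness/surjectivity, so the chain through (3) and (4) does not close as written; inserting the closed-convergence-domain argument fixes it.
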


\begin{proof}
(1)$\imp$(2): Since $S_gP=P$ for all $g\in G$, we have $Px\in\ol{\co}\S x\cap \Fix\S$
for all $x\in X$.

(2)$\imp$(3): Let $0\neq x'\in \Fix\S'$. Take $x\in X$ such that
$\sk{x',x}\neq 0$. If $y\in \ol{\co}\S x\cap \Fix\S$ then we have
$\sk{x',y}=\sk{x',x}\neq 0$. Hence $\Fix\S$ separates $\Fix\S'$.

(3)$\imp$(4): 
Let $(A_\a^\S)$ be a weak right $\S$-ergodic net and let $x'\in X'$ vanish
on $\Fix\S\oplus \ol{\lin}\rg(I-\S)$. 
Then in particular $\sk{x',y}=\sk{x',S_g y}=\sk{S_g'x',y}$ for
all $y\in X$ and $g\in G$. Hence $x'\in \Fix\S'$. Since $\Fix\S$
separates $\Fix\S'$ and $x'$ vanishes on $\Fix\S$, this implies
$x'=0$. 
Hence $\Fix\S\oplus \ol{\lin}\rg(I-\S)$ is dense in $X$ by the
Hahn-Banach theorem and it remains to show that $\Fix\S\oplus
\ol{\lin}\rg(I-\S)$ is closed. 
For $D:=\{x\in X: \sigma\text{-}\lim_\a A_\a^\S x\text{
  exists}\}$ we obtain $D=\Fix\S\oplus \ol{\lin}\rg(I-\S)$ and $D$ is
closed since $(A_\a^\S)$ is uniformly bounded.

(4)$\imp$(6): Let $(A_\a^\S)$ be any weak right $\S$-ergodic net. Then
$A_\a^\S x$ converges weakly to a fixed point of $\S$ for all $x\in
X$. 
Indeed, the convergence on
$\Fix\S$ is clear and the weak convergence to $0$ on $\lin\rg(I-\S)$
follows from weak right asymptotic
invariance and linearity of $(A_\a^\S)$. 
Since the set $\{x\in X:
\sigma\text{-}\lim_\a A_\a^\S x\text{ exists}\}$ is closed we obtain
weak convergence on all of $\Fix \S \oplus \ol{\lin}\rg(I-\S)$. The
limit of the net $(A_\a^\S)$ is the projection onto $\Fix\S$ along $\ol{\lin}\rg(I-\S)$.

(4)$\imp$(8): An analogous reasoning as in (4)$\imp$(6) yields the
strong convergence of $A_\a^\S x$ for every strong right $\S$-ergodic
net $(A_\a^\S)$ and every $x\in X$. 

(5)$\imp$(2): 
%Define $Px:=\sigma\text{-}\lim_\a A_\a^\S x\in\ol{\co}\S x$ for $x\in
Let $(A_\a^\S)$ be a weak right $\S$-ergodic net, take $x\in X$ and define $Px$ as the weak
limit of a convergent subnet $(A_{\b_x}^\S x)$ of $(A_\a^\S x)$. 
Then $Px\in\ol{\co}\S x\cap\Fix\S$ for all $x\in X$.

(6)$\imp$(1): 
Let $(A_\a^\S)$ be a weak right $\S$-ergodic net. Defining $Px$ as the weak
limit of $A_\a^\S x$ for each $x\in X$ we obtain $Px\in\ol{\co}\S x$ for
all $x\in X$.
Furthermore, for all $x\in X$ and $g\in G$ we
  have $Px-PS_gx=\sigma\text{-}\lim_{\a} A_{\a}^\S x-A_{\a}^\S S_gx=0$ and
  $Px-S_gPx$ since $Px\in \Fix\S$. 
% $=\sigma\text{-}\lim_{\a} A_{\a}^\S x-S_g A_{\a}^\S x=0$ by weak
%   asymptotic invariance and thus $S_gP=PS_g=P$ for all $g\in G$. 
Hence $S_gP=PS_g=P$ for all $g\in G$ and \cite[Theorem 1.2]{nagel73}
yield the mean ergodicity of $\S$.

The remaining implications are trivial.
\end{proof}

If the semigroup $\S$ is amenable and $(A_\a^\S)$ is a 
convergent $\S$-ergodic 
net, then the limit $Px:=\lim_\a A_\a^\S x$ is automatically a
fixed point of $\S$ for each $x\in X$, since $S_g Px-Px=\lim_{\a} A_{\a}^\S
x-S_g A_{\a}^\S x=0$ for each $g\in G$ by the asymptotic left
invariance. Hence the following corollary is a direct consequence of
Theorem \ref{thm:mean-ergodic-everywhere}.

\begin{cor}
  \label{cor:mean-ergodic-everywhere}
Let $G$ be represented on $X$ by a bounded amenable
semigroup $\S=\{S_g:g\in G\}$. Then the following assertions are equivalent.
\begin{enumerate}
\item $\S$ is mean ergodic with mean ergodic projection $P$.
\item $\ol{\co}\S x\cap \Fix\S\neq\emptyset$ for all $x\in X$.
\item $\Fix\S$ separates $\Fix\S'$.
\item $X= \Fix\S\oplus \ol{\lin}\rg(I-\S)$.   %\ol{\lin}\{y-S_gy: y\in X, g\in G\}
\item $A_\a^\S x$ has a weak cluster point for some/every
  weak $\S$-ergodic net $(A_\a^\S)$ and all $x\in X$.
\item $A_\a^\S x$ converges weakly for
  some/every weak $\S$-ergodic net~$(A_\a^\S)$ and all $x\in X$.
\item $A_\a^\S x$ converges weakly for
  some/every strong $\S$-ergodic net $(A_\a^\S)$ and all $x\in X$.
\item $A_\a^\S x$ converges strongly for
  some/every strong $\S$-ergodic net $(A_\a^\S)$ and all $x\in X$.
\end{enumerate}
The limit $P$ of the nets $(A_\a^\S)$ in the weak and strong operator
topology, respectively, is the mean ergodic projection of $\S$ mapping $X$ onto
$\Fix \S$ along $\ol{\lin}\rg(I-\S)$. 
\end{cor}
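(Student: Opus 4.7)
My plan is to deduce Corollary~\ref{cor:mean-ergodic-everywhere} directly from Theorem~\ref{thm:mean-ergodic-everywhere} together with one short observation, the same one already flagged in the paragraph preceding the corollary. Since an amenable semigroup is in particular right amenable, and every weak (resp.\ strong) two-sided $\S$-ergodic net is in particular a weak (resp.\ strong) right $\S$-ergodic net, the theorem applies. The equivalences $(1)\Leftrightarrow(2)\Leftrightarrow(3)\Leftrightarrow(4)$ do not mention ergodic nets at all and therefore transfer verbatim. Existence of at least one two-sided weak and one strong $\S$-ergodic net, needed to make sense of the "some" quantifiers in the corollary, is guaranteed by Proposition~\ref{prop:amenable-weak-ergodic-nets} and Corollary~\ref{cor:existence-of-strong-ergodic-nets}.

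For the conditions $(5)$--$(8)$, the only way the corollary differs from the theorem is that it no longer demands that cluster points and limits of $(A_\a^\S x)$ belong to $\Fix\S$. The key step is to show that in the two-sided setting this lies-in-$\Fix\S$ property is automatic. If $y$ is a weak cluster point of $(A_\a^\S x)$ along a subnet $(A_{\a_\b}^\S x)$, then by the weak continuity of $S_g$ and the left asymptotic invariance of $(A_\a^\S)$, which is inherited by every subnet,
$$S_g y - y = \sigma\text{-}\lim_\b \bigl(S_g A_{\a_\b}^\S x - A_{\a_\b}^\S x\bigr) = 0 \qquad (g\in G),$$
so $y\in\Fix\S$. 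The same identity, with no subnet taken, shows that a weak or strong limit of $(A_\a^\S x)$ lies in $\Fix\S$ as well.

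Combining the two observations, each of the corollary's conditions $(5)$--$(8)$ becomes equivalent to the corresponding condition of the theorem: the direction from theorem to corollary is trivial since a two-sided ergodic net is in particular a right ergodic net, and the reverse direction is exactly the automatic fixed-point property. The "some/every" distinction causes no trouble, since once any one of $(5)$--$(8)$ holds in either form, the equivalences $(1)$--$(4)$ kick in and force all the others. I do not anticipate any genuine obstacle in this argument; the only step deserving attention is verifying that left asymptotic invariance passes to subnets, which is nonetheless immediate from the definition.
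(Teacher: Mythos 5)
Your argument is correct and is essentially the paper's own: the paper likewise deduces the corollary from Theorem~\ref{thm:mean-ergodic-everywhere} by observing that left asymptotic invariance forces any (weak) limit or cluster point of $(A_\a^\S x)$ to lie in $\Fix\S$ automatically, via the same identity $S_g y - y = \lim_\a (S_g A_\a^\S x - A_\a^\S x) = 0$. Your additional care with subnets and the ``some/every'' quantifiers is a harmless elaboration of the same idea.
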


The next result can be found in Nagel \cite[Satz 1.8]{nagel73} (see
also Ghaffari \cite[Theorem 1]{ghaffari07}). 
We give a different proof.
\begin{cor}
\label{cor:amenable_mean_ergodic}
Let $G$ be represented on $X$ by a bounded amenable
semigroup $\S=\{S_g:g\in G\}$. If $\S$ is relatively compact with respect to
  the weak operator topology, then $\S$ is mean ergodic.
\end{cor}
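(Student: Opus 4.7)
The plan is to verify condition (5) of Corollary~\ref{cor:mean-ergodic-everywhere}: for some weak $\S$-ergodic net $(A_\a^\S)_{\a\in\A}$, the net $(A_\a^\S x)$ has a weak cluster point for every $x\in X$. Amenability together with Proposition~\ref{prop:amenable-weak-ergodic-nets} produces such a net, which lies in $\ol{\co}\S$; it therefore suffices to establish the weak-cluster-point property for each individual $x$.

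Fix $x\in X$. The evaluation map $\L(X)\to X$, $T\mapsto Tx$, is continuous from the weak operator topology on $\L(X)$ to the weak topology $\s(X,X')$ on $X$, so weak-operator relative compactness of $\S$ forces the orbit $\S x=\{S_g x:g\in G\}$ to be relatively weakly compact in $X$. Invoking Krein's theorem in the Banach space $X$, the closed convex hull $\ol{\co}(\S x)$ is weakly compact; note that the norm and weak closures of $\co(\S x)$ agree by convexity.

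Since every element of $\ol{\co}\S$ is a strong operator limit of operators in $\co\S$, evaluation at $x$ shows that $A_\a^\S x\in\ol{\co}(\S x)$ for every $\a\in\A$. Hence $(A_\a^\S x)$ is a net in the weakly compact set $\ol{\co}(\S x)$ and admits a weakly convergent subnet, verifying condition (5) and yielding the mean ergodicity of $\S$ via Corollary~\ref{cor:mean-ergodic-everywhere}. The only non-routine ingredient is the appeal to Krein's theorem; the rest is merely unfolding definitions.
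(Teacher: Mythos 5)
Your argument is correct and follows essentially the same route as the paper: both pass from weak-operator relative compactness of $\S$ to relative weak compactness of the orbit $\S x$, invoke the Krein--\v{S}mulian theorem to get weak compactness of $\ol{\co}\S x$, and then conclude via condition (5) of Corollary~\ref{cor:mean-ergodic-everywhere} applied to the weak $\S$-ergodic net supplied by Proposition~\ref{prop:amenable-weak-ergodic-nets}. The extra details you spell out (continuity of evaluation, $A_\a^\S x\in\ol{\co}\S x$) are correct but left implicit in the paper.
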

\begin{proof}
  Since $\S x$ is relatively weakly compact, we obtain that $\ol{\co}\S x$ is weakly compact for all $x\in X$ by the Krein-\v{S}mulian Theorem. Hence, if $(A_\a^\S)$ is a weak $\S$-ergodic net, then $A_\a^\S x$ has a weak cluster point for each $x\in X$. The mean ergodicity of $\S$ then follows from Corollary~\ref{cor:mean-ergodic-everywhere}.
\end{proof}

If the Banach space satisfies additional geometric properties,
contractivity of the semigroup implies amenability and mean
ergodicity.  
For uniformly convex spaces with strictly convex dual unit balls this has been shown by Alaoglu and Birkhoff \cite[Theorem 6]{alaoglu-birkhoff40} using the so-called \emph{minimal method}.
In \cite[Theorem 6']{day41} Day observed that the same method still works if uniform convexity is replaced by strict convexity.

\begin{cor}
  \label{cor:von-Neumann}
Let $X$ be a reflexive Banach space such that the unit balls of $X$
and $X'$ are strictly convex. If the semigroup
$G$ is represented on $X$ by a semigroup of contractions
$\S=\{S_g:g\in G\}$, then $\S$ is mean ergodic.
\end{cor}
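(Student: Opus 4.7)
The approach is the classical \emph{minimal method} of Alaoglu--Birkhoff and Day, run in parallel on $X$ and on the reflexive strictly convex dual $X'$. The aim is to produce the direct sum decomposition $X=\Fix\S\oplus M$ with $M:=\ol{\lin}\rg(I-\S)$; the associated linear projection $P$ onto $\Fix\S$ along $M$ will serve as the mean ergodic projection, the inclusion $P\in\ol{\co}\S$ being provided by Nagel \cite[Theorem~1.2]{nagel73} as quoted earlier in the paper.

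For the first ingredient, fix $x\in X$. Since $\S$ is bounded, the orbit $\S x$ is norm-bounded and therefore weakly relatively compact by reflexivity, so $\ol{\co}\S x$ is convex and weakly compact. Strict convexity of the unit ball of $X$ singles out a unique minimizer $y_x\in\ol{\co}\S x$ of $\|\cdot\|$. As $S_g$ is a contraction that maps $\ol{\co}\S x$ into itself, one has $\|S_g y_x\|\le\|y_x\|$, while minimality gives $\|S_g y_x\|\ge\|y_x\|$; uniqueness of the minimizer forces $S_g y_x=y_x$, so $y_x\in\Fix\S$. Choosing a net $A_\a\in\co\S$ with $A_\a x\to y_x$, the elements $x-A_\a x\in\lin\rg(I-\S)$ converge to $x-y_x$, whence $x-y_x\in M$ and thus $X=\Fix\S+M$.

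For the second ingredient I would run the same argument on $X'$. For every $x'\in X'$, the weakly compact convex set $\ol{\co}\{S_g'x':g\in G\}$ admits a unique minimum-norm element $z_{x'}\in\Fix\S'$. To show that $\Fix\S'$ separates $\Fix\S$, pick $0\neq x\in\Fix\S$ and an $x'\in X'$ norming $x$, then choose $A_\a\in\co\S$ with $A_\a'x'\to z_{x'}$ weakly; since $x\in\Fix\S$, $\langle z_{x'},x\rangle=\lim\langle x',A_\a x\rangle=\langle x',x\rangle=\|x\|\neq 0$. Every element of $\Fix\S\cap M$ is annihilated by every element of $\Fix\S'$ (because $\Fix\S'$ vanishes on $\rg(I-\S)$), so the separation just established forces $\Fix\S\cap M=\{0\}$.

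Combining both ingredients yields $X=\Fix\S\oplus M$; let $P$ be the associated linear projection onto $\Fix\S$. By uniqueness of the direct-sum decomposition, $Px=y_x\in\ol{\co}\S x$ for every $x\in X$, and Nagel's theorem upgrades this to $P\in\ol{\co}\S$. Finally, $S_gP=P$ because $\ran P\subseteq\Fix\S$, and $PS_g=P$ because $S_g x-x\in M\subseteq\ker P$, so $P$ is a zero element of $\ol{\co}\S$ and $\S$ is mean ergodic. The delicate point, and the place where both strict-convexity hypotheses are genuinely used, is the parallel use of the minimum-norm argument on $X$ and $X'$; once $\Fix\S+M=X$ and $\Fix\S\cap M=\{0\}$ are both secured, the rest is essentially bookkeeping.
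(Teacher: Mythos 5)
Your proof is correct, but it follows a genuinely different route from the paper's. The paper disposes of the corollary in three lines by quoting heavy machinery: strict convexity of the unit balls of $X$ and $X'$ makes the contraction semigroup $\S$ amenable by de Leeuw--Glicksberg, reflexivity plus boundedness makes $\S$ relatively compact in the weak operator topology, and then Corollary~\ref{cor:amenable_mean_ergodic} (amenable $+$ relatively weakly compact $\Rightarrow$ mean ergodic) finishes the argument. You instead rerun the classical \emph{minimal method} of Alaoglu--Birkhoff and Day from scratch: the unique minimum-norm point of the weakly compact convex set $\ol{\co}\S x$ is a common fixed point (this uses strict convexity in $X$ and contractivity), giving $X=\Fix\S+\ol{\lin}\rg(I-\S)$; the same argument in the strictly convex dual shows $\Fix\S'$ separates $\Fix\S$, killing the intersection; and Nagel's theorem upgrades $Px\in\ol{\co}\S x$ together with $S_gP=PS_g=P$ to $P\in\ol{\co}\S$, exactly as in the paper's proof of (6)$\Rightarrow$(1) of Theorem~\ref{thm:mean-ergodic-everywhere}. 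Your version is self-contained -- it never invokes amenability, ergodic nets, or the de Leeuw--Glicksberg result -- and it exhibits the mean ergodic projection concretely as the minimum-norm selection (incidentally showing $P$ is a contraction); the price is that you are essentially reproving the Alaoglu--Birkhoff--Day theorem that the paper cites as the classical antecedent, whereas the paper's proof shows how the statement drops out of the amenability framework built in Section~1. All the small implicit steps in your argument (that $S_g$ maps $\ol{\co}\S x$ into itself, that the weak and norm closures of the convex hull agree, that the algebraic splitting into two closed subspaces yields a bounded projection via the closed graph theorem) are routine and check out.
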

\begin{proof}
If $\S$ is a contractive semigroup in $\L(X)$ and the unit balls of $X$
and of $X'$ are strictly convex, then $\S$ is
amenable by \cite[Corollary 4.14]{deleeuw-glicksberg61}. 
Since $\S$ is
bounded on the reflexive space $X$, it follows that $\S$ is relatively compact with respect
to the weak operator topology. 
Hence Corollary \ref{cor:amenable_mean_ergodic} implies that $\S$ is mean ergodic.
\end{proof}

In some situations (see e.g. Assani \cite[Theorem 2.10]{assani03},
Walters \cite[Theorem 4]{walters96},
Lenz \cite[Theorem 1]{lenz09}) %\cite[Theorem1.3]{robinson94},
one is interested in convergence of an ergodic net only on
some given $x\in X$. 
Apart from the implication (2)$\imp$(3) the following result is a direct consequence of
Theorem~\ref{thm:mean-ergodic-everywhere} and
Corollary~\ref{cor:mean-ergodic-everywhere}  applied to the 
restriction $\S|_{Y_x}$ of $\S$ to the closed $\S$-invariant subspace $Y_x:=\ol{\lin}\S x$.

\begin{prop}
  \label{prop:mean-ergodic-for-x}
Let $G$ be represented on $X$ by a bounded (right) amenable
semigroup $\S=\{S_g:g\in G\}$ and let $x\in X$. Then the following
assertions are equivalent.

\begin{enumerate}
\item $\S$ is mean ergodic on $Y_x$ with mean ergodic projection $P$.
\item $\ol{\co}\S x\cap \Fix\S\neq\emptyset$.
\item $\Fix\S|_{Y_x}$ separates $\Fix\S|_{Y_x}'$.
\item $x\in \Fix\S\oplus\ol{\lin}\rg(I-\S)$.
\item $A_\a^\S x$ has a weak cluster point (in $\Fix\S$) for
  some/every weak (right) $\S$-ergodic net $(A_\a^\S)$.
\item $A_\a^\S x$ converges weakly (to a fixed point of $\S$) for
  some/every weak  (right) $\S$-ergodic net
  $(A_\a^\S)$.
\item $A_\a^\S x$ converges weakly (to a fixed point of $\S$) for
  some/every strong (right) $\S$-ergodic net
  $(A_\a^\S)$.
\item $A_\a^\S x$ converges strongly (to a fixed point of $\S$) for
  some/every strong  (right) $\S$-ergodic net $(A_\a^\S)$.
\end{enumerate}
The limit $P$ of the nets $A_\a^\S$ in the weak and strong operator
topology on $Y_x$, respectively, is the mean ergodic projection of
$\S|_{Y_x}$ mapping $Y_x$ onto $\Fix\S|_{Y_x}$ along $\ol{\lin}\rg(I-\S|_{Y_x})$.
\end{prop}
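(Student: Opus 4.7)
The plan is to apply Theorem~\ref{thm:mean-ergodic-everywhere} (in the right amenable case) and Corollary~\ref{cor:mean-ergodic-everywhere} (in the amenable case) to the restriction $\S|_{Y_x}$ on the closed $\S$-invariant subspace $Y_x := \ol{\lin}\S x$, which is again a bounded (right) amenable representation. Under this reduction, conditions (1) and (3) of the proposition coincide with conditions (1) and (3) of the theorem applied to $\S|_{Y_x}$, while (2), (4)--(8) are their pointwise versions at $z=x$. The bridge from pointwise statements at $x$ to global statements on $Y_x$ is routine: uniform boundedness of any (right) $\S$-ergodic net combined with (right) asymptotic invariance transports convergence at $x$ first to $\S x$, by linearity to $\lin\S x$, and by density to $Y_x = \ol{\lin}\S x$. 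The same reasoning lifts (4): writing $x=f+r$ with $f\in\Fix\S$ and $r\in\ol{\lin}\rg(I-\S)$ and applying a strong (right) $\S$-ergodic net, which exists by Corollary~\ref{cor:existence-of-strong-ergodic-nets}, gives $f=\lim_\a A_\a^\S x\in Y_x$, hence $r\in Y_x$ as well, so the decomposition takes place inside $Y_x$ as required by condition (4) of the theorem.

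The only implication that does not reduce mechanically is $(2)\imp(3)$, because the proof of $(2)\imp(3)$ in Theorem~\ref{thm:mean-ergodic-everywhere} chose its base point in response to a given $0\neq x'\in\Fix\S'$, whereas here $x$ is prescribed in advance and only a single witness $y\in\ol{\co}\S x\cap\Fix\S$ is available. I would argue as follows. First, $y\in Y_x\cap\Fix\S=\Fix\S|_{Y_x}$. Given $0\neq x'\in\Fix\S|_{Y_x}'$, invariance of $x'$ under $\S|_{Y_x}$ yields $\sk{x',S_g x}=\sk{x',x}$ for every $g\in G$, so $x'$ takes the constant value $\sk{x',x}$ on $\S x$. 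Since $x'\not\equiv 0$ on $Y_x=\ol{\lin}\S x$, some finite linear combination $\sum_i c_i S_{g_i}x$ is not annihilated by $x'$; this combination evaluates to $(\sum_i c_i)\sk{x',x}$, which forces $\sk{x',x}\neq 0$. Finally, picking $(A_\g)\subset\co\S$ with $A_\g x\to y$ and using $\sk{x',A_\g x}=\sk{x',x}$ for each $\g$, continuity of $x'$ on $Y_x$ gives $\sk{x',y}=\sk{x',x}\neq 0$, so $y\in\Fix\S|_{Y_x}$ separates $x'$.

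The main obstacle is precisely this $(2)\imp(3)$ step; everything else rests on Theorem~\ref{thm:mean-ergodic-everywhere}, Corollary~\ref{cor:mean-ergodic-everywhere}, and the restriction/extension bookkeeping between $\S$ and $\S|_{Y_x}$ sketched above, together with the remark that any weak/strong (right) $\S$-ergodic net on $X$ restricts to a weak/strong (right) $\S|_{Y_x}$-ergodic net on $Y_x$.
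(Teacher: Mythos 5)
Your proposal is correct and follows essentially the same route as the paper: reduce everything to Theorem~\ref{thm:mean-ergodic-everywhere} and Corollary~\ref{cor:mean-ergodic-everywhere} applied to $\S|_{Y_x}$, using that the convergence set of a uniformly bounded, asymptotically invariant net is a closed $\S$-invariant subspace, and treat $(2)\imp(3)$ separately. Your separate argument for $(2)\imp(3)$ --- invariance of $x'$ forces $\sk{x',\cdot}$ to equal $\sk{x',x}$ on $\lin\S x$, hence $\sk{x',x}\neq 0$ and then $\sk{x',y}=\sk{x',x}\neq 0$ for the witness $y$ --- is exactly the paper's argument, merely spelled out in more detail.
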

\begin{proof}
  (2)$\imp$(3): Let $0\neq x'\in \Fix\S|_{Y_x}'$ and take $y\in Y_x$
  such that $\sk{x',y}\neq 0$. Since $x$ generates the space $Y_x$
  this yields $\sk{x',x}\neq 0$. If $z\in \ol{\co}\S x\cap \Fix\S$,
  then $z\in Y_x$ and we have $\sk{x',z}=\sk{x',x}\neq 0$. Hence
  $\Fix\S|_{Y_x}$ separates $\Fix\S|_{Y_x}'$.

The other implications follow directly from
Theorem~\ref{thm:mean-ergodic-everywhere} and
Corollary~\ref{cor:mean-ergodic-everywhere} by noticing that the set
$\{y\in X: \lim_\a A_\a^\S y \text{ exists}\}$ is a closed $\S$-invariant
subspace of $X$.
\end{proof}

\section{Uniform families of ergodic operator nets}

We now use the above results on mean ergodic semigroups in order to
study the convergence of uniform families of ergodic nets.
%  In this
% section we restrict ourselves to the case of right amenable semigroups.

Let $I$ be an index set and suppose that the semigroup $G$
is represented on $X$ by bounded semigroups
$\S_i=\{S_{i,g}:g\in G\}$ for each $i\in I$. 
Moreover, we assume that the $\S_i$ are \emph{uniformly bounded}, i.e., 
$\sup_{i\in I}\sup_{g\in G}\|S_{i,g}\|<\infty$. 
\begin{definition}
\label{definition:uniformly-family}
Let $\A$ be a directed set and let $(A_\a^{\S_i})_{\a\in\A}\subset
\L(X)$ be a net of operators for each $i\in I$. 
Then $\{(A_\a^{\S_i})_{\a\in\A}: i\in I\}$ is a \emph{uniform family
  of right (left) ergodic nets} if
\begin{enumerate}
\item  $\forall\a\in\A, \forall \e>0, \forall x_1,\dots, x_m\in X,
  \exists g_1,\dots, g_N\in G$ such that for each $i\in I$
  there exists a convex combination $\sum_{j=1}^{N}c_{i,j}
  S_{i,g_j}\in \co\S_i$ 
  satisfying
$$\sup_{i\in I}\|A_\a^{\S_i}x_k-\textstyle{\sum_{j=1}^{N}}c_{i,j}
S_{i,g_j}x_k\|<\e\quad \forall k\in\{1,\dots, m\};$$
\item $\displaystyle \lim_\a \sup_{i\in
    I}\|A_\a^{\S_i}x-A_\a^{\S_i}S_{i,g}x\|= 0 \quad \left(\lim_\a \sup_{i\in
    I}\|A_\a^{\S_i}x-S_{i,g}A_\a^{\S_i}x\|=0\right) \quad \forall g\in G,
   x\in X.$
\end{enumerate}
The set $\{(A_\a^{\S_i})_{\a\in\A}: i\in I\}$ is called a \emph{uniform family of
ergodic nets} if it is a uniform family of
left and right ergodic nets.
\end{definition}
Notice that if 
$\{(A_\a^{\S_i})_{\a\in\A}: i\in I\}$ is a uniform family of (right)
ergodic nets, then each $ (A_\a^{\S_{i}})_{\a\in\A}$ is a 
strong (right) $\S_i$-ergodic net.

Here are some examples of uniform families of ergodic nets.
\begin{prop}
\label{prop:uniformly-ergodic}
  \begin{enumerate}[(a)]

  \item          % klassische Cesaro Mittel
Let $S\in \L(X)$ with $\|S\|\le 1$. Consider the semigroup $(\N,+)$ being represented on $X$ by the families $\S_\lambda=\{(\lambda S)^n: n\in \N\}$ for $\lambda\in\Torus$.
Then 
$$\textstyle\left\{\left( \frac{1}{N}\sum_{n=0}^{N-1}(\lambda
    S)^n\right)_{N\in\N}: \lambda\in\Torus\right\}$$
is a uniform family of ergodic nets.

\item         % Abel Mittel
 In the situation of (a),
$$\textstyle\left\{\left( (1-r)\sum_{n=0}^{\infty}r^n\lambda^n
    S^n\right)_{0<r<1}: \lambda\in\Torus\right\}$$
is a uniform family of ergodic nets.

\item         % Walters und Santos-Walkden
Let $K$ be a compact space and $\ph:K\to K$ a continuous
  transformation. Let $H$ be a Hilbert space and $S: f\mapsto f\circ\ph$ the Koopman
  operator corresponding to $\ph$ on the space $C(K,H)$ of continuous $H$-valued functions on $K$. Denote by $U(H)$ the set of unitary operators on
  $H$ and by $\Lambda$ the set of continuous maps $\gamma:K\to
  U(H)$. Consider the semigroup $(\N,+)$ and its representations
  on $C(K,H)$  given by the families $\S_\gamma=\{(\gamma
  S)^n: n\in\N\}$ for $\gamma\in\Lambda$, where $(\gamma S)
  f(x)=\gamma(x)Sf(x)$ for $x\in K$ and $f\in C(K,H)$. Then 
$$\textstyle\left\{\left( \frac{1}{N}\sum_{n=0}^{N-1}(\gamma
    S)^n\right)_{N\in\N}: \gamma\in\Lambda\right\}$$
is a uniform family of ergodic nets.

\item          % kontinuierliche Cesaro Mittel
Let $(S(t))_{t\in\R_+}$ be a bounded $C_0$-semigroup on $X$. Consider
  the semigroup $(\R_+,+)$ being represented on $X$ by the families
  $\S_r=\{e^{2\pi i rt}S(t): t\in\R_+ \}$ for $r\in B$,
  where $B\subset\R$ is bounded. Then
$$\textstyle\left\{\left ( \frac{1}{s}\int_0^s e^{2\pi i r t} S(t)\,dt\right)_{s\in\R_+}:
  r\in B \right\}$$
is a uniform family of ergodic nets.

\item          % Beispiel Eberlein
Let $\S=\{S_g:g\in G\}$ be a bounded representation on $X$ of an abelian semigroup $G$. Order the elements of $\co\S$ by
  setting $U\le V$ if there exists $W\in \co\S$ such that
  $V=WU$. Denote by $\widehat{G}$ the character semigroup of $G$, i.e., the set of continuous multiplicative maps $G\to\Torus$, and
  consider the representations $\S_\chi=\{\chi(g)S_g: g\in G\}$ of $G$ on $X$
  for $\chi\in\widehat{G}$. Then
$$\textstyle\left\{\left ( \sum_{i=1}^N c_i \chi(g_i)S_{g_i}
\right )_{\sum_{i=1}^N c_i S_{g_i}\in\co\S}: \chi\in \widehat{G}\right\}$$
is a uniform family of ergodic nets.

\item              % Folner Netze
\label{item:folner}
 Let $H$ be a locally compact group with left Haar measure
  $|\cdot|$ and let $G\subset H$ be a subsemigroup. Suppose that there
  exists a \emph{F\o lner net} $(F_\a)_{\a\in \A}$ in $G$. 
 Suppose that $\S:=\{S_g: g\in G\}$ is a bounded representation of $G$
 on $X$. Consider the representations $\S_\chi=\{\chi(g)S_g: g\in G\}$
 of $G$ on $X$ for $\chi\in \Lambda$, where $\Lambda\subset\widehat{H}$ is uniformly
  equicontinuous on compact sets. 
Then
$$\textstyle\left\{\left ( \frac{1}{|F_\a|}\int_{F_\a} \chi(g) S_g \,dg\right)_{\a\in \A}:
  \chi\in \Lambda\right\}$$
is a uniform family of right ergodic nets.

  \end{enumerate}
\end{prop}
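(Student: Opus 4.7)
The plan is to verify the two conditions of Definition~\ref{definition:uniformly-family} in each of the six cases (a)--(f) separately, reusing the arguments that established ergodicity of the individual nets in Examples~\ref{ex:cesaro-nets}. The unifying feature is that in every example the family parameter enters the formula for $A_\alpha^{\S_i}$ only through a cocycle of modulus one (the scalars $\lambda^n$, $e^{2\pi i rt}$, the unitary cocycle $\gamma(\varphi^j\cdot)$, or the character $\chi$), so that every bound available for a single $\S_i$ can be promoted to one independent of $i$.

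For condition~(1) one exhibits, for each $\alpha$, a convex combination $\sum_j c_j S_{i,g_j}$ in which the sampling points $g_j$ and the coefficients $c_j$ are chosen independently of $i$. In (a), (c), and (e) the operator $A_\alpha^{\S_i}$ is already literally such a combination, so (1) holds with zero error. In (b) the same is true after truncating the geometric series, using the uniform tail bound $\|(1-r)\sum_{n\geq N}r^n \lambda^n S^n x\|\leq r^N M\|x\|$. In (d) one discretises the integral by a Riemann sum on a fine partition of $[0,s]$; the error is made small uniformly in $r$ by combining strong continuity of the $C_0$-semigroup with $|e^{2\pi i r(t-\tau)}-1|\leq 2\pi|r|\,|t-\tau|$ and the boundedness of $B$. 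In (f) one discretises the integral over the compact set $F_\alpha$, and here the uniform equicontinuity of $\Lambda$ together with continuity of $g\mapsto S_g x_k$ ensures that a single partition works uniformly in $\chi$.

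For condition~(2) one writes out $A_\alpha^{\S_i}x - A_\alpha^{\S_i}S_{i,g}x$ (and the analogous left-sided expression) and exhibits telescoping or translation-invariance cancellation. The Ces\`aro-type averages in (a) and (c) give $\tfrac{1}{N}\bigl(\sum_{n=0}^{g-1}-\sum_{n=N}^{N+g-1}\bigr)$ twisted by the cocycle, hence a bound $2gM\|x\|/N$ independent of $i$; the Abel means in (b) and the $C_0$-integral means in (d) are handled analogously. In (f) the left-Haar substitution $h\mapsto gh$ combined with $\chi(g)\chi(g^{-1})=1$ reduces the difference to an integral over $F_\alpha\Delta gF_\alpha$, yielding the bound $\tfrac{|gF_\alpha\Delta F_\alpha|}{|F_\alpha|}M\|x\|$, which tends to zero by the F\o lner property uniformly in $\chi$. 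Part (e) requires the extra algebraic observation that, since $G$ is abelian and $\chi$ is multiplicative, the twisting $T_\chi\colon\sum c_i S_{g_i}\mapsto \sum c_i\chi(g_i)S_{g_i}$ is a semigroup homomorphism on $\co\S$; choosing $U_0=\tfrac{1}{N}\sum_{n=0}^{N-1}S_g^n$, a direct computation gives $T_\chi(U_0)x - T_\chi(U_0)S_{\chi,g}x = \tfrac{1}{N}(x-\chi(g)^N S_g^N x)$, and the factorisation $T_\chi(WU_0)=T_\chi(W)T_\chi(U_0)$ together with the uniform bound $\|T_\chi(W)\|\leq M$ propagates this $O(1/N)$ estimate to every $V\geq U_0$ in the ordering of Example~\ref{ex:cesaro-nets}(\ref{ex:eberlein}).

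The main obstacle throughout is preserving uniformity in the family index while repeating the classical single-net arguments. In parts (a)--(e) the modulus-one property of the cocycles (respectively the unitarity of $\gamma(x)$ in (c)) is precisely what absorbs the twist into the fixed bound $M$ on $\S$, and in (f) the uniform equicontinuity of $\Lambda$ is exactly the hypothesis that upgrades the Riemann-sum and F\o lner estimates from pointwise to uniform in $\chi$.
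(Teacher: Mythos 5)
Your proposal is correct and follows essentially the same strategy as the paper: condition (1) is verified by exhibiting convex combinations whose sampling points and coefficients are independent of the family parameter (exact in (a), (c), (e); a truncated geometric series in (b); a Riemann-sum discretisation based on uniform equicontinuity in (d) and (f)), and condition (2) by telescoping or symmetric-difference estimates that are uniform precisely because the twists have modulus one (respectively, are unitary), with the same $T_\chi$-homomorphism factorisation in (e). The only minor divergences are that the paper obtains (d) as a special case of (f) via the F\o lner net $([0,s])_{s>0}$ rather than discretising directly, and that for condition (2) of (b) it invokes the Ces\`aro-implies-Abel Tauberian theorem (checking uniformity in $\lambda$) instead of your direct estimate on the Abel means --- both routes work, but note that in (b) the truncated sum must be renormalised by $(1-r^N)^{-1}$ so that it actually lies in $\co\S_\lambda$.
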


\begin{proof}

  \begin{enumerate}[(a)]
  \item  Property (1) of Definition \ref{definition:uniformly-family}
    is clear. To see (2), let $x\in X$ and $k\in\N$. Then 
 \begin{align*}
  \sup_{\lambda\in\Torus}\left\|\frac{1}{N}\sum_{n=0}^{N-1}(\l S)^nx-(\l
  S)^{n+k}x\right\|&\le \sup_{\l\in\Torus} \frac{1}{N}\sum_{n=0}^{k-1}\|(\l S)^n
x\|+\frac{1}{N}\sum_{n=N}^{N-1+k}\|(\l S)^n x\|\\
& \le \frac{2k}{N}\|x\|\xrightarrow[N\to\infty]{}0.
 \end{align*}

\item (1): Let $0<r<1$ and $\e>0$. Choose $N\in\N$ such that
$r^{N}<\frac{\e}{2}$. Then for all $\l\in \Torus$ we have
\begin{align*}
  \|(1-r)& \sum_{n=0}^{\infty}r^n\lambda^n
    S^n-\frac{(1-r)}{(1-r^N)} \sum_{n=0}^{N-1}r^n\lambda^n
    S^n \|\\
&\le \left\|(1-r) \sum_{n=0}^{\infty}r^n\lambda^n
    S^n-(1-r) \sum_{n=0}^{N-1}r^n\lambda^n
    S^n\right\|+ \\
&\hspace{4cm} \left \|(1-r) \sum_{n=0}^{N-1}r^n\lambda^n
    S^n - \frac{(1-r)}{(1-r^{N})} \sum_{n=0}^{N-1}r^n\lambda^n
    S^n \right\|\\
&\le (1-r)\sum_{n=N}^\infty r^n +
(1-r)\left|1-\frac{1}{(1-r^N)}\right| \sum_{n=0}^{N-1} r^n \\
&\le r^N +r^N < \e.
\end{align*}

(2): Let $x\in X$ and $k\in\N$. Define for each $\l\in\Torus$ the
sequence $x^{(\l)}$ by $x_n^{(\l)}:=(\l S)^nx-(\l S)^{n+k}x$ for
$n\in\N$. Then it follows from (a) that
$\sup_{\lambda}\|\frac{1}{N}\sum_{n=0}^{N-1} x_n^{(\l)}\|\longrightarrow
0$.  It is well known that Cesàro convergence implies the
convergence of the
Abel means to the same limit (see \cite[Proposition
2.3]{li07}). One checks  
% in the proof of \cite[Proposition 2.3]{li07} 
that if the Cesàro convergence is
uniform in $\l\in\Torus$, then the convergence of the Abel means is also
uniform. Hence we obtain $\lim_{r\uparrow 1}\sup_{\l\in\Torus}\|(1-r)\sum_{n=0}^\infty r^n
x_n^{(\l)}\|= 0$.

\item % Kozykel
(1) is clear. To see (2) let $f\in C(K,H)$ and $k\in\N$. Then 
$$\|(\gamma S) f\|=\sup_{x\in K}\|\gamma(x)f(\ph(x))\|_H =\sup_{x\in
  K}\|f(\ph(x))\|_H \le \|f\|,$$ since $\gamma(x)$ is unitary for all
$x\in K$. Hence
\begin{align*}
  \sup_{\gamma\in\Lambda}\left\|\frac{1}{N}\sum_{n=0}^{N-1}(\gamma S)^nf-(\gamma
  S)^{n+k}f\right\|&\le \sup_{\gamma\in\Lambda} \frac{1}{N}\sum_{n=0}^{k-1}\|(\gamma S)^n
f\|+\frac{1}{N}\sum_{n=N}^{N-1+k}\|(\gamma S)^n f\|\\
& \le \frac{2k}{N}\|f\|\xrightarrow[N\to\infty]{}0.
 \end{align*}

\item % Kontinuierliche Cesaro mittel
This is a special case of (\ref{item:folner}) for the F\o lner net
$([0,s])_{s>0}$ in $\R_+$ and the set $\Lambda:=\{\chi_r: \R_+ \to
\Torus: r\in B\}$, where $\chi_r(t)=e^{2\pi i r t}$ for $t\in \R_+$. Notice that
$\Lambda\subset \widehat{\R}$ is uniformly equicontinuous on compact sets since $B$ is bounded.

\item (1) is clear. To see (2) let $x\in X$, $g\in G$ and $\e>0$. Choose $N\in\N$ such that
$\frac{2M^2\|x\|}{N}<\e$, where $M=\sup_{g\in G}\|S_g\|$. Then for all
$V=W \frac{1}{N}\sum_{n=0}^{N-1}S_g^n \ge
\frac{1}{N}\sum_{n=0}^{N-1}S_g^n$, where $W= \sum_{i=0}^{k-1}c_i
S_{g_i} \in\co\S$, we have
\begin{align*}
  \sup_{\chi\in\widehat{G}}\left\|\sum_{i=0}^{k-1} \right.&\left . \sum_{n=0}^{N-1}\frac{1}{N}c_i \chi(g_i   g^n)S_{g_ig^n}x-\sum_{i=0}^{k-1}\sum_{n=0}^{N-1}\frac{1}{N}c_i
    \chi(g_i g^n)S_{g_ig^n}\chi(g)S_g x\right\|\\
&\le \sup_{\chi\in\widehat{G}}\left\| \sum_{i=0}^{k-1}c_i\chi(g_i)
S_{g_i}\right\|\cdot  \left\| \frac{1}{N}\sum_{n=0}^{N-1}(\chi(g)S_g)^n(x-\chi(g) S_g x)\right\|
\\
&\le \sup_{\chi\in\widehat{G}} M \frac{1}{N}\|x-(\chi(g)S_g)^N x\|\le M \frac{1}{N}2M\|x\|<\e.
\end{align*}

\item     % Folner Netze
(1): Let $\a\in \A$, $\e>0$ and $x_1,\dots, x_m\in X$. Since $F_\a$ is
compact and $\Lambda$ is uniformly
  equicontinuous on $F_\a$ the family $\{g\mapsto \chi(g)S_gx_k:
  \chi\in\Lambda\}$ is also uniformly equicontinuous on $F_\a$ for
  each $k\in\{1,\dots, m\}$. Hence for each $k\in\{1,\dots, m\}$ we can
choose an open neighbourhood $U_k$ of the unity of $H$
  satisfying 
$$g,h\in G, \en h^{-1}g\in U_k \en \imp\en \sup_{\chi\in
  \Lambda}\|\chi(g)S_g x_k-\chi(h)S_g x_k\|<\e.$$
Then $U:=\bigcap_{k=1}^m U_k$ is still an open neighbourhood of
unity. Since $F_\a$ is compact there exists $g_1,\dots, g_N\in F_\a$
such that $F_\a\subset\bigcup_{n=1}^N g_n U$. Defining $V_1:=g_1 U\cap
F_\a$ and $V_n:=(g_n U\cap F_\a) \setminus V_{n-1}$ for $n=2,\dots, N$
we obtain a disjoint union $F_\a=\bigcup_{n=1}^N V_n.$
Hence for all $\chi\in \Lambda$ and $k\in \{1,\dots, m\}$ we have
\begin{align*}
  \left\| \frac{1}{|F_\a|}\int_{F_\a}\chi(g)S_g x_k
    dg-\right . &\left .\sum_{n=1}^N \frac{|V_n|}{|F_\a|} \chi(g_n) S_{g_n}x_k
  \right\|\\
&\le \frac{1}{|F_\a|}\sum_{n=1}^N
  \int_{V_n}\underbrace{\|\chi(g)S_g x_k-\chi(g_n)S_{g_n} x_k\|}_{<\e}dg\\
&< \frac{1}{|F_\a|}\sum_{n=1}^N |V_n|\e=\e.
\end{align*}

(2): If $x\in X$ and $h\in G$, then we have 
\begin{align*}
\sup_{\chi\in\Lambda}\left\|\frac{1}{|F_\a|}\int_{F_\a}\chi(g)S_gx-\chi(hg)S_{hg}x\,dg\right\|
&
  \le  \sup_{\chi\in\Lambda} \frac{1}{|F_\a|}\int_{F_\a\triangle h
    F_\a} \|\chi(g) S_g x\|\, dg\\
&\le \frac{|F_\a\triangle h F_\a|}{|F_\a|} \sup_{g\in G}\|S_g\| \|x\|\longrightarrow 0.
\end{align*}
\end{enumerate}
\end{proof}

Now, let $\{(A_\a^{\S_i})_{\a\in\A}: i\in I\}$ be a uniform family of
right ergodic nets. 
If $x\in X$ and $\S_i$ is right amenable and mean ergodic on $\ol{\lin}\S_i x$ for each
$i\in I$, then it follows from Proposition
\ref{prop:mean-ergodic-for-x} that $A_\a^{\S_i}x\to P_i
x$ for each $i\in I$, where $P_i$ denotes the mean ergodic
projection of $\S_i|_{\ol{\lin}\S_i x}$.
The question arises, when this convergence is uniform in $i\in I$. 
The following elementary example shows that in general this cannot be
expected. 

\begin{example}
  Let $X=\C$ and let $S=I_\C\in L(\C)$ be the identity operator on
  $\C$. Consider the semigroup $(\N,+)$ being represented on $\C$ by the
  families $\S_\lambda=\{\lambda^n I_\C: n\in\N\}$ for
  $\lambda\in\Torus$. Then for each $\lambda\in \Torus$ the
  Cesàro means $\frac{1}{N}\sum_{n=0}^{N-1}\lambda^n$ converge, but
  the convergence is not uniform in $\l\in\Torus$.
\end{example}
However, the following theorem 
gives a sufficient condition for uniform convergence.

\begin{thm}
  \label{thm:uniform-convergence-Banach-space}
Let $I$ be a compact set and let $G$ be represented on $X$ by the
uniformly bounded right amenable semigroups $\S_i=\{S_{i,g}: g\in G\}$ for all $i\in
I$. Let $\{(A_\a^{\S_i})_{\a\in\A}: i\in I\}$ be a uniform family of
right ergodic nets.  Take $x\in X$ and assume that
\begin{enumerate}[(a)]
\item %$\lim_\a\|A_\a^{\S_i}x-P_ix\|= 0$ for all $i\in I$,
$\S_i$ is mean ergodic on $\ol{\lin}\S_i x$ with mean ergodic projection $P_i$ for each $i\in I$, 
\item $I\to \R_+$, $i\mapsto \|A_\a^{\S_i}x-P_ix\|$ is continuous for each $\a\in\A$.
\end{enumerate}
Then 
$$\lim_\a\sup_{i\in I}\|A_\a^{\S_i}x-P_ix\|= 0.$$
\end{thm}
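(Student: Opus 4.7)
The plan is to turn the pointwise-in-$i$ convergence $A_\a^{\S_i}x\to P_ix$ into uniform convergence by exploiting the uniform-family clauses (1) and (2) of Definition~\ref{definition:uniformly-family}, and then closing the argument with a Dini-style covering of the compact space $I$.

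First I would observe that, since $(A_\a^{\S_i})_\a$ restricts to a strong right $\S_i$-ergodic net on $Y_i:=\ol{\lin}\S_i x$, hypothesis (a) together with Proposition~\ref{prop:mean-ergodic-for-x} gives $A_\a^{\S_i}x\to P_ix$ in norm for every $i\in I$. Thus the functions $f_\a(i):=\|A_\a^{\S_i}x - P_ix\|$, continuous on the compact space $I$ by (b), converge pointwise to zero; set $M:=\sup_{i\in I}\sup_{g\in G}\|S_{i,g}\|<\infty$. The key identity is that $A_\a^{\S_i}P_ix=P_ix$, since $P_ix\in\Fix\S_i|_{Y_i}$ and $A_\a^{\S_i}\in\ol{\co}\S_i$; hence for any auxiliary index $\a_0\in\A$,
$$A_\a^{\S_i}x - P_ix=\bigl(A_\a^{\S_i}x-A_\a^{\S_i}A_{\a_0}^{\S_i}x\bigr)+A_\a^{\S_i}\bigl(A_{\a_0}^{\S_i}x-P_ix\bigr),$$
and the second summand has norm at most $Mf_{\a_0}(i)$.

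To control the first summand, I would fix $\e>0$ and invoke clause (1): for the given $\a_0$ choose $g_1,\dots,g_N\in G$ such that for each $i$ there is a convex combination $\sum_j c_{i,j}S_{i,g_j}\in\co\S_i$ with $\sup_i\|A_{\a_0}^{\S_i}x-\sum_j c_{i,j}S_{i,g_j}x\|<\e$. Applying $A_\a^{\S_i}$ introduces an error of at most $M\e$ uniformly in $i$ and leaves the convex combination $\sum_j c_{i,j}(A_\a^{\S_i}x-A_\a^{\S_i}S_{i,g_j}x)$. By clause (2) applied to each of the \emph{finitely many} $g_j$, there is some $\a(\a_0,\e)\in\A$ beyond which $\sup_i\|A_\a^{\S_i}x - A_\a^{\S_i}S_{i,g_j}x\|<\e$ for every $j$; for $\a\ge\a(\a_0,\e)$ the first summand is therefore bounded by $(M+1)\e$ uniformly in $i$. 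In particular, $f_\a(i)\le(M+1)\e+Mf_{\a_0}(i)$ uniformly in $i\in I$ for all $\a$ beyond a threshold depending on $\a_0$ and $\e$.

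To finish, I would apply a Dini-style covering argument. Given $\delta>0$, for each $i_0\in I$ pointwise convergence yields $\a_0(i_0)\in\A$ with $f_{\a_0(i_0)}(i_0)<\delta$, and (b) gives an open neighbourhood $U(i_0)$ of $i_0$ on which $f_{\a_0(i_0)}<2\delta$. By compactness extract a finite subcover $U(i_1),\dots,U(i_n)$, and choose $\a^*\in\A$ above all of the finitely many thresholds $\a(\a_0(i_k),\e)$, $k=1,\dots,n$. For $\a\ge\a^*$ and any $i\in I$, picking $i_k$ with $i\in U(i_k)$ and combining the two estimates gives $f_\a(i)\le(M+1)\e+2M\delta$; letting $\e,\delta\to 0$ concludes $\sup_{i\in I}f_\a(i)\to 0$. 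The main obstacle is that hypothesis (b) only asserts pointwise continuity of each $f_\a$, not equicontinuity of the family $\{f_\a\}_\a$, so continuity alone cannot propagate the pointwise convergence at one point to uniform convergence near it; the two uniform clauses (1) and (2) in Definition~\ref{definition:uniformly-family} are exactly what closes this gap by letting one bound $f_\a(i)$ for large $\a$ in terms of $f_{\a_0}(i)$ with an error that is uniform in $i$.
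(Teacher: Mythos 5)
Your proof is correct and follows essentially the same route as the paper: the same decomposition $A_\a^{\S_i}x-P_ix=(A_\a^{\S_i}x-A_\a^{\S_i}A_{\a_0}^{\S_i}x)+A_\a^{\S_i}(A_{\a_0}^{\S_i}x-P_ix)$ based on the identity $A_\a^{\S_i}P_ix=P_ix$, with the first summand controlled exactly as in the paper's Lemma~\ref{lemma:uniform-cesaro}, which you re-derive inline from clauses (1) and (2) of Definition~\ref{definition:uniformly-family}. The only difference is in the final compactness step, where you cover $I$ by finitely many neighbourhoods on which suitable $f_{\a_0(i_k)}$ are small, while the paper takes a maximizing net $(i_\a)$ and extracts convergent subnets; the two are equivalent uses of compactness of $I$ and continuity of the individual functions $f_\a$.
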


For the proof of Theorem  \ref{thm:uniform-convergence-Banach-space} we need a lemma.

\begin{lemma}
  \label{lemma:uniform-cesaro}
If $\{(A_\a^{\S_i})_{\a\in\A}: i\in I\}$ is a uniform family of right
ergodic nets, then 
$$\lim_\a\sup_{i\in
  I}\|A_\a^{\S_i}x-A_\a^{\S_i}A_\b^{\S_i}x\|=0\quad\forall \b\in\A,
x\in X.$$
\end{lemma}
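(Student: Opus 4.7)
The plan is to replace the inner operator $A_\b^{\S_i}$ by a finite convex combination of the $S_{i,g_j}$ whose index set $\{g_1,\dots,g_N\}$ does not depend on $i\in I$, and then to apply the uniform asymptotic right invariance to each of the finitely many $g_j$ separately. This reduces the lemma to a straightforward triangle-inequality computation.

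First I will fix $\b\in\A$, $x\in X$, and $\e>0$, and invoke condition (1) of Definition~\ref{definition:uniformly-family} at $\a=\b$ with the single test vector $x$ and tolerance $\e$. This supplies a finite set $g_1,\dots,g_N\in G$, independent of $i$, together with convex coefficients $c_{i,1},\dots,c_{i,N}\ge 0$ for each $i\in I$ with $\sum_j c_{i,j}=1$ and
$$\sup_{i\in I}\Big\|A_\b^{\S_i}x - \sum_{j=1}^N c_{i,j}\, S_{i,g_j}x\Big\| < \e.$$
Then, using $\sum_j c_{i,j}=1$, I will split
\begin{align*}
A_\a^{\S_i}x - A_\a^{\S_i}A_\b^{\S_i}x
&= \sum_{j=1}^N c_{i,j}\bigl(A_\a^{\S_i}x - A_\a^{\S_i}S_{i,g_j}x\bigr)\\
&\quad + A_\a^{\S_i}\Bigl(\sum_{j=1}^N c_{i,j}\, S_{i,g_j}x - A_\b^{\S_i}x\Bigr)
\end{align*}
and estimate the two pieces separately.

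The first piece has norm at most $\max_{1\le j\le N}\|A_\a^{\S_i}x-A_\a^{\S_i}S_{i,g_j}x\|$. By condition (2) of Definition~\ref{definition:uniformly-family}, applied to each of the finitely many $g_j$, I can choose $\a_0\in\A$ past which this maximum is $\le \e$ uniformly in $i\in I$. The second piece has norm at most $\|A_\a^{\S_i}\|\cdot \e \le M\e$, where $M:=\sup_{i\in I}\sup_{g\in G}\|S_{i,g}\|<\infty$ by the standing uniform-boundedness hypothesis; this bounds $\|A_\a^{\S_i}\|$ because $A_\a^{\S_i}\in\ol{\co}\S_i$. Combining gives $\sup_{i\in I}\|A_\a^{\S_i}x-A_\a^{\S_i}A_\b^{\S_i}x\|\le (1+M)\e$ for every $\a\ge\a_0$, and since $\e$ was arbitrary the claim follows.

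There is no genuine obstacle; the one conceptual point is recognizing that condition (1) of Definition~\ref{definition:uniformly-family} is precisely designed to supply a \emph{common} finite support $\{g_1,\dots,g_N\}$ for the approximating convex combinations across all indices $i\in I$ simultaneously, without which condition (2) could not be leveraged uniformly in $i$.
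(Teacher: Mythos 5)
Your proof is correct and follows essentially the same route as the paper's: approximate $A_\b^{\S_i}x$ uniformly in $i$ by a convex combination over a common finite set $g_1,\dots,g_N$ via condition (1), then control the two resulting terms using condition (2) for each $g_j$ and the uniform bound $\|A_\a^{\S_i}\|\le M$. The only cosmetic difference is that the paper requests tolerance $\e/M$ in the first step to land on $2\e$ where you get $(1+M)\e$; both are immaterial.
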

\begin{proof}
  Let $\b\in\A$, $\e>0$ and $x\in X$. 
Since $\{(A_\a^{\S_i})_{\a\in\A}: i\in I\}$ is a uniform family of
right ergodic nets, there exists $g_1,\dots, g_N\in G$
and for each $i\in I$ a
  convex combination $\sum_{j=1}^{N}c_{i,j} S_{i,g_j}\in \co\S_i $
  such that $\sup_{i\in I}\|A_\b^{\S_i}x-\sum_{j=1}^{N}c_{i,j}
S_{i,g_j}x\|<\e/M$, where $M=\sup_{i\in I}\sup_{g\in G}\|S_{i,g}\|$. Now, choose $\a_0\in\A$ such that for all $\a>\a_0$
$$\sup_{i\in I}\|A_\a^{\S_i}x-A_\a^{\S_i}S_{i,g_j}x\|<\e\quad\forall
j=1,\dots,N.$$
Then for all $\a>\a_0$ and $i\in I$ we obtain 
\begin{align*}
\textstyle
  \|A_\a^{\S_i}x-A_\a^{\S_i}A_\b^{\S_i}x\|&\textstyle\le  \|A_\a^{\S_i}x-A_\a^{\S_i}\sum_{j=1}^{N}c_{i,j}
S_{i,g_j}x\|+\|A_\a^{\S_i}\sum_{j=1}^{N}c_{i,j}
S_{i,g_j}x-A_\a^{\S_i}A_\b^{\S_i}x\|\\
&\le\textstyle
\sum_{j=1}^{N}c_{i,j}\|A_\a^{\S_i}x-A_\a^{\S_i}S_{i,g_j}x\|+M\|\sum_{j=1}^{N}c_{i,j}
S_{i,g_j}x-A_\b^{\S_i}x\|\\
&< 2\e.
\end{align*}
\end{proof}

\begin{proof}[Proof of Theorem
  \ref{thm:uniform-convergence-Banach-space}]
By Proposition \ref{prop:mean-ergodic-for-x} and the hypotheses the function $f_\a:I\to\R_+$ defined by 
  $f_\a(i)=\|A_\a^{\S_i}x-P_ix\|$ is continuous for each $\a\in\A$ and
  $\lim_\a f_\a(i)=0$ for all $i\in I$.
%
% Define the function $f_\a:I\to\R_+$ by
% $f_\a(i)=\|A_\a^{\S_i}x-P_ix\|$. By hypothesis $f_\a$ is continuous
% for each $\a\in \A$ and by Proposition \ref{prop:mean-ergodic-for-x}
% we obtain $\lim_\a f_\a(i)=0$ for all $i\in I$. 
Compactness and continuity yield a net $(i_\a)_{\a\in \A}\subset I$ with  $\sup_{i\in
    I}f_\a(i)=f_\a(i_\a)$ for all $\a\in \A$. To show that $\lim_\a\sup_{i\in
    I}f_\a(i)=0$ it thus suffices to show that every subnet of
  $(f_\a(i_\a))$ has a subnet converging to $0$. 
Let
  $(f_{\a_k}(i_{\a_k}))$ be a subnet of $(f_\a(i_\a))$ and let $\e>0$. Since $I$ is
  compact, we can choose a subnet of $(i_{\a_k})$, also denoted by $(i_{\a_k})$, such that $i_{\a_k}\to
  i_0$ for some $i_0\in I$. Since $f_\a$ converges pointwise to $0$,
  we can take $\b\in\A$ such that $f_{\b}(i_0)<\e/M$, where
  $M=\sup_{i\in I}\sup_{g\in G}\|S_{i,g}\|$. By continuity of $f_{\b}$ there
  exists $k_1$ such that for all $k>k_1$ we have
  $f_{\b}(i_{\a_k})-f_{\b}(i_0)<\e/M$. By Lemma
\ref{lemma:uniform-cesaro} there exists $k_2>k_1$ such that for all $k>k_2$
\begin{align*}
  f_{\a_k}(i_{\a_k})&\le\|A_{\a_k}^{\S_{i_{\a_k}}}x-A_{\a_k}^{\S_{i_{\a_k}}}A_{\b}^{\S_{i_{\a_k}}}x\|+\|A_{\a_k}^{\S_{i_{\a_k}}}A_{\b}^{\S_{i_{\a_k}}}x-A_{\a_k}^{\S_{i_{\a_k}}}P_{i_{\a_k}}x\|\\
& \le \e +M \|A_{\b}^{\S_{i_{\a_k}}}x-P_{i_{\a_k}}x\| \\
&\le \e +M (f_\b(i_{\a_k})-f_\b(i_0))+ M f_\b(i_0)\\
&\le 3\e.
\end{align*}
Hence $\lim_\a \sup_{i\in I} f_\a(i)=0$.
%$\lim_\a\sup_{i\in I}\|A_\a^{\S_i}x-P_ix\|=\lim_\a \sup_{i\in I} f_\a(i)=\lim_\a f_\a(i_\a)= 0$.
\end{proof}

We now apply the above theory  to
operator semigroups on the space $C(K)$ of complex valued continuous functions on a
compact metric space $K$.

\begin{cor}
\label{cor:assani}
  Let $\ph:K\to K$ be a continuous
  map, $S:f\mapsto f\circ\ph$ the corresponding Koopman operator on $C(K)$,
 and assume that there exists a
  unique $\ph$-invariant Borel probability measure $\mu$ on $K$. If
  $f\in C(K)$ satisfies $P_\lambda f=0$ for all $\lambda\in\Torus$,
  where $P_\lambda$ denotes the mean ergodic projection of $\{(\lambda
  S)^n: n\in\N\}$ on $L^2(K,\mu)$, then
  \begin{enumerate}
  \item $\displaystyle\lim_{N\to\infty}\sup_{\lambda\in\Torus}\left\|\textstyle\frac{1}{|F_N|}\sum_{n\in F_N}\lambda^nS^n
f\right\|_{\infty}=0$\en
for each F\o lner sequence $(F_N)_{N\in\N}$ in $\N$,
\item $\displaystyle\lim_{r\uparrow 1}\sup_{\lambda\in\Torus}\left\|(1-r)\textstyle\sum_{n=0}^{\infty}r^n\lambda^nS^n f\right\|_\infty=0.$
  \end{enumerate}

\end{cor}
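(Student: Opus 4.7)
The strategy is to realize conclusions (1) and (2) as instances of Theorem~\ref{thm:uniform-convergence-Banach-space} with compact index set $I=\Torus$, Banach space $X=C(K)$, representations $\S_\lambda=\{(\lambda S)^n:n\in\N\}$ of $(\N,+)$ on $C(K)$, and vector $f$. For (1), the uniform family of right ergodic nets $\bigl(\tfrac{1}{|F_N|}\sum_{n\in F_N}\lambda^nS^n\bigr)_{N\in\N}$ indexed by $\lambda\in\Torus$ is supplied by Proposition~\ref{prop:uniformly-ergodic}(\ref{item:folner}) applied to the locally compact group $H=\Z$ with subsemigroup $G=\N$, the given F\o lner sequence $(F_N)$, and the character family $\Lambda=\widehat{\Z}\cong\Torus$; uniform equicontinuity of $\Lambda$ on the compact (hence finite) subsets of $\Z$ is automatic. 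Part (2) uses instead the uniform family of Abel means provided by Proposition~\ref{prop:uniformly-ergodic}(b).

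The continuity hypothesis (b) of Theorem~\ref{thm:uniform-convergence-Banach-space} is easy: $\lambda\mapsto A_\a^{\S_\lambda}f$ is a finite convex combination in the F\o lner case and a normally convergent power series in the Abel case, with coefficients in $C(K)$, hence it depends continuously on $\lambda$ in the $\sup$-norm, and so does its norm.

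The substantive step is hypothesis (a): for each fixed $\lambda\in\Torus$, $\S_\lambda$ must be mean ergodic on $\ol{\lin}\S_\lambda f\subset C(K)$ with mean ergodic projection sending $f$ to $0$. Since $\mu$ is $\varphi$-invariant, $\lambda S$ extends to a unitary operator on $L^2(K,\mu)$, and von~Neumann's mean ergodic theorem yields $L^2$-convergence of the Cesàro averages of $(\lambda S)^nf$ to $P_\lambda f=0$. To upgrade to $C(K)$-convergence I would invoke the classical uniform Wiener--Wintner statement for uniquely ergodic systems (cf.\ Assani, Robinson, Walters): under the hypothesis that $P_\lambda f=0$ in $L^2(\mu)$ and $(K,\varphi)$ is uniquely ergodic, $\tfrac{1}{N}\sum_{n=0}^{N-1}\lambda^nS^nf$ tends to $0$ uniformly on $K$. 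The matching statement for Abel means then follows from the standard Cesàro--Abel Tauberian transition already used in the proof of Proposition~\ref{prop:uniformly-ergodic}(b). By the equivalences of Proposition~\ref{prop:mean-ergodic-for-x} (item~(8)), this pointwise-in-$\lambda$ strong convergence on $C(K)$ is precisely mean ergodicity of $\S_\lambda$ on $\ol{\lin}\S_\lambda f$ with projection $0$ applied to $f$.

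With (a) and (b) in place, Theorem~\ref{thm:uniform-convergence-Banach-space} delivers $\lim_\a\sup_{\lambda\in\Torus}\|A_\a^{\S_\lambda}f\|_\infty=0$, which is exactly the content of (1) and (2). The main obstacle is hypothesis (a): a purely $L^2$ argument does not suffice, and the upgrade to sup-norm convergence for each individual $\lambda$ genuinely relies on unique ergodicity; once this pointwise-in-$\lambda$ information is secured, the compactness of $\Torus$ and the machinery of Section~1 do the rest.
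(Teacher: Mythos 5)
Your proposal is correct and follows essentially the same route as the paper: both establish hypothesis (a) of Theorem~\ref{thm:uniform-convergence-Banach-space} for each fixed $\lambda$ by invoking the classical uniform Wiener--Wintner result for uniquely ergodic systems (the paper cites Robinson's Theorem~1.1) together with Proposition~\ref{prop:mean-ergodic-for-x}, verify the continuity hypothesis (b) directly, and then apply Theorem~\ref{thm:uniform-convergence-Banach-space} to the two uniform families from Proposition~\ref{prop:uniformly-ergodic}. The only cosmetic difference is that you route the Abel-means case through a Ces\`aro--Abel Tauberian transition, whereas the paper simply reuses the ``some/every'' equivalence of Proposition~\ref{prop:mean-ergodic-for-x}; both are valid.
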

\begin{proof}
 By the hypotheses it follows from Robinson \cite[Theorem
 1.1]{robinson94} and Proposition \ref{prop:mean-ergodic-for-x} that
 the semigroups $\S_{\lambda}=\{(\lambda S)^n: 
  n\in\N\}$ are mean ergodic on $\ol{\lin}\S_{\lambda} f\subset C(K)$ for each
  $\lambda\in\Torus$. 
 Let now $(F_N)_{N\in\N}$ be a F\o lner sequence in $\N$ and consider
 the uniform family of ergodic nets 
  $$\textstyle\left\{\left(\frac{1}{|F_N|}\sum_{n\in F_N}\lambda^nS^n\right)_{N\in\N}:
    \lambda\in\Torus\right\}$$ 
(cf. Proposition
  \ref{prop:uniformly-ergodic} (\ref{item:folner})). If $P_\lambda
  f=0$ for all $\lambda\in \Torus$, then also condition (b) in
  Theorem~\ref{thm:uniform-convergence-Banach-space} is satisfied
  since the map 
$\lambda\mapsto\frac{1}{|F_N|}\sum_{n\in F_N}\lambda^nS^n f$ is
continuous for each $N\in\N$. Hence
$\lim_{N\to\infty}\sup_{\lambda\in\Torus}\|\frac{1}{|F_N|}\sum_{n\in
  F_N} \lambda^nS^n f\|_{\infty}=0$ by Theorem~\ref{thm:uniform-convergence-Banach-space}. 

The same reasoning applied to the uniform family of ergodic nets 
$$\textstyle\left\{\left((1-r)\sum_{n=0}^{\infty} r^n \lambda^n
    S^n\right)_{0<r<1}: \lambda\in\Torus\right\}$$
 yields the second assertion.
\end{proof}

\begin{remark}
  In \cite[Theorem 2.10]{assani03} Assani proved the first assertion of Corollary~\ref{cor:assani} for the F\o lner sequence $F_N=\{0,\dots,N-1\}$ in $\N$. 
Generalisations of this result can be found in Walters
\cite[Theorem~5]{walters96}, Santos and Walkden
\cite[Prop.~4.3]{santos07} and Lenz \cite[Theorem~2]{lenz09}. We will
systematically study and unify these cases in a subsequent paper. 
\end{remark}

\textbf{Acknowledgement.} The author is grateful to Rainer Nagel for his support, valuable discussions and comments.

\bibliographystyle{siam}
%\bibliography{bibfile}
 \bibliographystyle{amsplain}

\end{document}